\newtheorem{theorem}{Theorem}
\newtheorem{definition}{Definition}[section]
\newtheorem{remark}{Remark}[section]
\newtheorem{proposition}{Proposition}[section]
\newtheorem{lemma}{Lemma}[section]
\newtheorem{corollary}{Corollary}[section]
\numberwithin{equation}{section}
\def\namedlabel#1#2{\begingroup
    #2%
    \def\@currentlabel{#2}%
    \phantomsection\label{#1}\endgroup
}
\newcommand{\ensemblenombre}[1]{\mathbb{#1}}
\newcommand{\Esp}{\ensemblenombre{E}}
\newcommand{\Z}{\ensemblenombre{Z}}
\newcommand{\R}{\ensemblenombre{R}}
\newcommand{\Scb}{S}
\newcommand{\A}{\mathcal{A}}
\newcommand{\tomega}{\widetilde{\omega}}
\newcommand{\Ocb}{\Omega}
\newcommand{\Fcb}{\mathcal{F}}
\newcommand{\Sc}{\widetilde{S}}
\newcommand{\Oc}{\widetilde{\Omega}}
\newcommand{\Fc}{\widetilde{\mathcal{F}}}
\newcommand{\tpi}{\widetilde{\pi}}
\newcommand{\tP}{\widetilde{P}}
\newcommand{\tX}{\widetilde{X}}
\newcommand{\tm}{\widetilde{m}}
\newcommand{\1}{\mathds{1}}
\begin{document}
\title{Percolation results for the Continuum Random Cluster Model}

\author{Pierre Houdebert}
\affil[1]{Laboratoire de Math\'ematiques Paul Painlev\'e\\University of Lille 1, France 
 \texttt{pierre.houdebert@math.univ-lille1.fr}}

\maketitle

\begin{abstract}
{The continuum random cluster model is a Gibbs modification of the standard Boolean model with intensity $z>0$ and law of radii $Q$. 
The formal
unnormalized density is given by $q^{N_{cc}}$ where $q$ is a fixed parameter and $N_{cc}$ is the number of connected components in the random structure. 
We prove for a large class of parameters that percolation occurs for $z$ large enough and does not occur for $z$ small enough. 
An application to the phase transition of the Widom-Rowlinson model with random radii is given.
Our main tools are stochastic domination properties, a fine study of the interaction of the model and a Fortuin-Kasteleyn representation.
}
  \bigskip

\noindent {\it Keywords:} Gibbs point process ; Widom-Rowlinson model ; stochastic comparison ; Fortuin-Kasteleyn representation.

  \bigskip

\noindent {\it AMS MSC 2010:} 60D05 ; 60G10 ; 60G55 ; 60G57 ; 60G60 ; 60K35 ; 82B21 ; 82B26 ; 82B43.
\end{abstract}

\section{Introduction}
This paper is interested in the Continuum Random Cluster Model (called CRCM in the following), which is defined on a bounded window as a penalized Poisson Boolean model with intensity $z>0$ and law of radii $Q$.
The unnormalized density is  $q^{N_{cc}}$ where $q>0$ is a positive real number and $N_{cc}$ denotes the number of connected components of the random closed set considered. 
In the infinite volume regime a global density is meaningless and a definition of the CRCM using Gibbs modifications is required. 
In this paper we prove the existence of a subcritical and supercritical phase for the percolation of the CRCM.

The original random cluster model was introduced as a lattice model in the late 1960's by Fortuin and Kasteleyn to unify models of percolation as Ising and Potts models. 
Properties and results about this model, such as existence of random cluster model on infinite graphs, percolation and phase transition properties can be found in \cite{georgii_haggstrom_maes,grimmett_livre_rcm}. 
In the continuum setting the CRCM  is directly linked to the well-known Widom-Rowlinson multi-type model by the so-called Fortuin-Kasteleyn representation (also known as grey representation or random cluster representation).
This representation was used in the 1990's to give a new proof of the phase transition for the Widom-Rowlinson model \cite{chayes_kotecky}, or for the more general class of continuum Potts models \cite{georgii_haggstrom}.
The CRCM is also studied in stochastic geometry and spatial statistics as an interacting random germ-grain model \cite{moller_helisova08,moller_helisova10}. 
For a suitable parameter $q$ the CRCM fits as best as possible the clustering of the real dataset. 


The existence of the infinite volume CRCM, defined throught standard DLR formalism, was recently investigated in \cite{dereudre_houdebert} which gives an existence result in cases like $q<1$ or unbounded radii.

Percolation refers to the existence of at least one unbounded (or infinite) connected component in the random structure.
For the Poisson Boolean model percolation is well-understood, see \cite{meester_roy}, and we have the existence of a positive  threshold $z^*$ for the intensity, such that
percolation occurs for $z>z^*$ and not for $z<z^*$.
Because of its strong independence properties, the Poisson Boolean model is sometimes irrelevant for applications in physics or biology.

In recent works \cite{jansen_2016,stucki_2013} percolation was proved to occur in the case of Boolean model with deterministic radii and germs driven by a Gibbs point process.
Both proofs strongly rely on the independent  marking of the Boolean model and don't apply for the CRCM.

Moreover, in statistical mechanics the percolation of the CRCM leads to the phase transition of the Widom-Rowlinson model.
This was already done in the case of deterministic radii in \cite{chayes_kotecky} and \cite{georgii_haggstrom}.
Percolation is also related to uniqueness of the Gibbs measure, with the method of disagreement percolation, which is based on constructing a coupling with good properties, see \cite{georgii_haggstrom_maes,vandenberg_maes}.

The aim of the present paper is to provide percolation results for the CRCM with parameters as general as possible. 
The two cases  considered are (C1) $q \geq 1$, $\int R^d Q(dR)<\infty$ and (C2) $q<1$, $Q([0,R_0])=1$ for some $R_0$.
In both cases we prove the absence of percolation for small $z$ and the existence of percolation for large $z$.
Part of the result is trivially obtained using a stochastic domination result of Georgii and K\"uneth \cite{georgii_kuneth}. 
The challenging part,
proving the existence of percolation for large $z$ in (C1) and the absence of percolation for small $z$ in (C2),
relies on a discrete stochastic domination result proved by Liggett, Schonmann and Stacey in \cite{liggett_schonmann_stacey}.
This result is stated in Proposition \ref{proposition_liggett_domination_sto}.
This method was already successfully used in \cite{coupier_dereudre} to prove a percolation result for the continuum Quermass-interaction model.

The paper is organized as follows. 
Section \ref{section.Notations} is devoted  to the introduction of the model and the notations.
Section \ref{section.resutats.perco} contains the main results of the paper, concerning percolation for the CRCM.
In Section \ref{section.resultat.phase.transition} we present a phase transition result for the Widom-Rowlinson model with random radii.
This result is a non-trivial extension of the result in \cite{chayes_kotecky} and \cite{georgii_haggstrom}, relying on an infinite-volume FK-representation.
In Section \ref{section_stochastic_domination} we introduce basic notions of stochastic domination and we prove, as a direct consequence of Theorem 1.1 of \cite{georgii_kuneth}, the absence of percolation for small activities in the case (C1) and the percolation for large activities in the case (C2).
Section \ref{section_preuve_theo1} is devoted to the proof of the percolation for large activities $z$ in the case (C1) and Section \ref{section_preuve_theo2} deals with the proof of the absence of percolation for small activities in the case (C2).
Finally Section \ref{section_preuve_resultats_intermediaires} contains a sketch of the proof of standard results involving the uniqueness of the unbounded cluster and the infinite-volume FK-representation.

\section{Notations and definitions}\label{section.Notations}
\subsection{State space and reference measure} 
For $d$ at least 2, we denote by $\Scb$ the space $\R^d \times \R^+$ endowed with the Borel $\sigma$-algebra.
A configuration  $\omega$ is a non-negative integer-valued measure on $S$, which can be represented as $\omega=\sum_{i\in I} \delta_{(x_i,R_i)}$ for a finite or infinite sequence $(x_i,R_i)_{i\in I}$, with finite mass on $\Lambda \times \R^+$ for every bounded subset $\Lambda$ of $\R^d$. 
The configuration set $\Omega$ is equipped with the classical $\sigma$-algebra $\Fcb$ generated by the counting variables $\omega \mapsto \omega(\Gamma)$ where $\Gamma$ is a bounded Borel subset of $\Scb$. 
The configuration $\omega$ restricted to a subset $\Lambda$ of $\R^d$ is defined by $\omega_{\Lambda}(.):=\omega (.\cap \Lambda \times \R^+)$ and $\Fcb _{\Lambda}$ is the sub $\sigma$-algebra of $\Fcb$ generated by the counting variables $\omega \mapsto \omega(\Gamma)$ where $\Gamma$ is a bounded subset of $\Lambda \times \R ^+$. 
We write $(x,R)\in \omega$ if $\omega(\{(x,R)\})>0$. 
To each configuration $\omega$ is associated the germ-grain structure 
$$
L(\omega)=\underset{(x,R)\in \omega}\bigcup B(x,R)
$$ 
where $B(x,R)$ is the Euclidean closed ball centred in $x$ with radius $R$.

For a positive $z$ and a probability measure $Q$ on $\R^+$, $\pi^{z,Q}$ denotes the distribution on $\Omega$ of the homogeneous Poisson point process with spatial intensity $z$ and with independent marks distributed by $Q$.
For $\Lambda \subseteq \R ^d$, $\pi^{z,Q}_{\Lambda}$ denotes the projection of $\pi^{z,Q}$ on $\Lambda \times \R^+$. 
The random closed set $L$ under the law $\pi^{z,Q}$ is the so-called Poisson Boolean model with intensity $z$ and radii distribution $Q$.

\subsection{Interaction}

For every configuration $\omega$, the connected components in $L(\omega)$ are defined as followed.
Two points $x,y \in \R ^d$ are in the same connected component of $L(\omega)$ if there is a finite number of points $(x_1,R_1), \dots ,(x_n,R_n)$ in $\omega$ such that
\begin{itemize}
\item $x \in B(x_1,R_1)$,
\item $y \in B(x_n,R_n)$,
\item $\forall i \in \{ 1, \dots ,n-1\}$, $B(x_i,R_i) \cap B(x_{i+1},R_{i+1}) \not = \emptyset$.
\end{itemize}
\begin{remark}
This connectivity is the usual one defined with the Gilbert graph.
However it is different from the topological one for some infinite configurations.
\end{remark}

The random cluster interaction between the particles in a finite configuration $\omega$ is given by the unnormalized density 
$ q^{ N_{cc}(\omega)} $,
where $N_{cc}(\omega)$ denotes the number of connected components of  $L(\omega)$. 
This density is well-defined only for finite configurations. 
As usual, for infinite configurations we define a local conditional density. 

\begin{definition}  
For a bounded subset $\Lambda$ of $\R^d$, the $\Lambda$-local number of connected components of a configuration (finite or infinite) is given by
\begin{equation}\label{definitionlimite}
N_{cc}^{\Lambda}(\omega) = \underset{\Delta \to \R^d}{\lim}
\left(  
N_{cc}(\omega_{\Delta}) - N_{cc}(\omega_{\Delta \setminus \Lambda})
\right) \nonumber
\end{equation}
where the limit, taken along any increasing sequence, is well defined, see \cite[Prop. 2.1]{dereudre_houdebert}. 
\end{definition}

\subsection{Continuum Random Cluster Model}
The continuum random cluster model is defined using standard DLR formalism which requires that the probability measure satisfies equilibrium equations based on Gibbs kernels, see equation \eqref{DLR}. 

\begin{definition}
A probability measure $P$ on $(\Ocb, \Fcb)$ is a continuum random cluster model for parameters $z$, $Q$ and $q$ (CRCM($z,Q,q$)) if it is stationary and if for every bounded subset $\Lambda$  of  $\R^d$ and every bounded measurable function $f$ we have
\begin{equation}\label{DLR}
\int_{\Ocb} f(\omega) P(d\omega)
=
\int_{\Ocb} \int_{\Ocb} f( \omega'_{\Lambda} + \omega_{\Lambda^c} )
\frac{q^{ N_{cc}^{\Lambda}( \omega'_{\Lambda} + \omega_{\Lambda^c} )}}
{Z_{\Lambda}(\omega_{\Lambda^c})}
\pi^{z,Q}_{\Lambda}(d\omega'_{\Lambda})
P(d\omega),
\end{equation}
where
$Z_{\Lambda}(\omega_{\Lambda^c})
:= \int_{\Ocb} q^{N_{cc}^{\Lambda}( \omega'_{\Lambda} + \omega_{\Lambda^c} )}
\pi^{z,Q}_{\Lambda}(d\omega'_{\Lambda})$ is the partition function which is assumed to be non-degenerate.

Equivalently, for $P$-almost every configuration $\omega$ the conditional law of $P$ given $\omega_{\Lambda^c}$ is absolutely continuous with respect to $\pi^{z,Q}_\Lambda$ with  density $q^{N_{cc}^{\Lambda}(.+\omega_{\Lambda^c}) }/ Z_{\Lambda}(\omega_{\Lambda^c}).$
\end{definition}

These equations, for every bounded $\Lambda$, are called DLR (Dobrushin, Lanford, Ruelle) equations. Existence and (non)-uniqueness of the CRCM are standard but complex questions of statistical mechanics. 
 In a recent paper \cite{dereudre_houdebert}, the existence was proved for a large class of parameters $(z,Q,q)$ such as the case $q<1$ or unbounded radii.
In this paper we are interested in the two following cases.
\begin{enumerate}
\item[(C1)] $q \geq 1$ and $Q$ satisfying $\int R^d Q(dR) <+\infty$.
\item[(C2)] $q<1$ and $Q$ satisfying $Q([0,R_0])=1$ for a given $R_0>0$.
\end{enumerate}
For both cases the existence of a CRCM was proved in \cite{dereudre_houdebert}.




\section{Results}
\subsection{Percolation results} \label{section.resutats.perco}
A configuration $\omega$ is said to percolate if $L(\omega)$ contains at least one unbounded connected component. 
We say a probability measure percolates (respectively does not percolate) if the probability of the percolation event $\{ \omega \text{ percolates} \}$ is $1$ (respectively $0$). 
The first natural question is about the number of infinite connected components, which is answered by  the following Theorem \ref{theo_uniquness_cc_infini}.

\begin{theorem}\label{theo_uniquness_cc_infini}
For every CRCM, almost surely the number of unbounded connected components is at most 1.
\end{theorem}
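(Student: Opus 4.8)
The plan is to establish the Burton--Keane uniqueness phenomenon in the present continuum Gibbsian setting. Let $N(\omega)\in\{0,1,2,\dots\}\cup\{\infty\}$ be the number of unbounded connected components of $L(\omega)$ (in the Gilbert-graph sense of the Remark above); it is $\Fcb$-measurable and translation invariant, so it suffices to prove $P(N\geq 2)=0$. First I would reduce to the ergodic case: the decomposition of the stationary measure $P$ along its translation-invariant $\sigma$-algebra has stationary components that again satisfy the DLR equations \eqref{DLR} by standard Gibbsian arguments, hence are themselves CRCM$(z,Q,q)$; for an ergodic such measure $N$ is almost surely a deterministic constant $k$, and one only has to exclude $k\in\{2,3,\dots\}$ and $k=\infty$. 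The common tool is the finite-energy (insertion tolerance) property: by \eqref{DLR}, conditionally on $\omega_{\Lambda^c}$ the law of $\omega_\Lambda$ has the strictly positive density $q^{N_{cc}^\Lambda(\,\cdot\,+\omega_{\Lambda^c})}/Z_\Lambda(\omega_{\Lambda^c})$ with respect to $\pi^{z,Q}_\Lambda$, so any event of positive $\pi^{z,Q}_\Lambda$-probability has positive conditional $P$-probability. In particular, given $\omega_{\Lambda^c}$ there is positive conditional probability that $L(\omega_{\Lambda'})$ is connected and covers a fixed sub-box $\Lambda'\Subset\Lambda$ — a positive-probability event for the Poisson process, since $\operatorname{supp}Q\neq\emptyset$ — and on this event all unbounded clusters of $L(\omega)$ meeting $\Lambda'$ belong to one and the same cluster.

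Assume $2\le k<\infty$. As $\Lambda'$ increases to $\R^d$, the events $\{\text{all unbounded clusters of }L(\omega)\text{ meet }\Lambda'\}$ increase to $\{N<\infty\}$, which here has probability $1$; fix $\Lambda'$ for which this event has positive probability. Performing the blob insertion above inside $\Lambda'$ merges the $k$ unbounded clusters — which all meet $\Lambda'$ — into one, and creates no new unbounded cluster (the added blob being bounded); hence with positive probability the resulting configuration has $N\le k-1$, contradicting $N=k$ almost surely. So no finite value $k\ge 2$ can occur.

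It remains to rule out $k=\infty$, which is the heart of the matter. Partition $\R^d$ into congruent cubes of a fixed, suitably large side; following Burton and Keane, say a cube $C$ is an \emph{encounter cell} of $\omega$ if inside $C$ a single unbounded cluster of $L(\omega)$ splits into three pairwise disjoint unbounded ``arms'', in the sense that removing a suitable finite family of the balls of $\omega$ located in $C$ disconnects it into at least three unbounded pieces whose only mutual connection passes through $C$. When $N=\infty$ almost surely, a finite-energy argument shows $P(C\text{ is an encounter cell})>0$: with positive probability three unbounded clusters reach $C$, and one joins them in a ``$Y$'' by inserting a bounded blob. By stationarity there is then $\delta>0$ with $P(C\text{ is an encounter cell})\ge\delta$ for every $C$, so the expected number of encounter cells inside a box $\Lambda_n$ of side $n$ is at least of order $\delta\, n^{d}$. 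On the other hand, a deterministic combinatorial-geometric lemma — using the almost-sure local finiteness of the grain structure to arrange the infinite arms issued from the encounter cells of $\Lambda_n$ as disjoint branches of a forest whose leaves exit through $\partial\Lambda_n$ — bounds the number of encounter cells in $\Lambda_n$ by a constant multiple of $|\partial\Lambda_n|=O(n^{d-1})$, for every configuration. For $n$ large these two estimates are incompatible, whence $P(N=\infty)=0$, which completes the proof.

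I expect the finite-energy property to be the routine part (it rests on local absolute continuity with respect to the Poisson process and on the integrability built into the definition of a CRCM); the delicate point is the last step, namely choosing the notion of encounter cell for unions of overlapping balls of variable radii so that it is simultaneously realised with positive probability by the insertion construction and compatible with the $O(|\partial\Lambda_n|)$ combinatorial bound. No idea beyond Burton--Keane is needed, but the continuum and variable-radius features require some care; being standard, this is only sketched (see Section~\ref{section_preuve_resultats_intermediaires}).
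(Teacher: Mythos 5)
Your proposal matches the paper's proof in all essentials: reduction to the ergodic case via the decomposition into ergodic CRCM, a finite-energy (insertion-tolerance) lemma obtained directly from the strict positivity of the conditional density $q^{N_{cc}^{\Lambda}(\cdot+\omega_{\Lambda^c})}/Z_{\Lambda}(\omega_{\Lambda^c})$ in the DLR equations \eqref{DLR}, and then the classical Burton--Keane argument, which the paper delegates to \cite[Section 3.6]{meester_roy} and which you sketch explicitly. The only cosmetic point is that the covering/insertion step needs $Q\neq\delta_0$ rather than merely $\operatorname{supp}Q\neq\emptyset$, but in the excluded case $Q=\delta_0$ the statement is trivial, so your argument is essentially the paper's.
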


The proof of this result uses classical percolation techniques and is developed in Section \ref{section_preuve_resultats_intermediaires}.

The main question of the present paper is the classical question : "Does percolation occur?". 
One trivial case is when $Q=\delta_{0}$ and there is trivially no percolation since the CRCM is just a Poisson point process.
In the following this case is omitted.
The following theorems are the main results of the present paper.
These theorems prove the existence, for both cases (C1) and (C2), of a subcritical phase for small activities $z$ where no percolation occurs and a supercritical phase for large activities $z$ where percolation occurs.
\begin{theorem}\label{theoreme_case_1}
In the case (C1),
\begin{itemize}
\item there exists $z_0:=z_0(Q,q,d)$ positive such that for every $z<z_0$, no $CRCM(z,Q,q)$ percolates;
\item with the additional assumption $Q(\{0\})=0$, there exists $z_1:=z_1(Q,q,d)$ finite such that for every $z>z_1$, every $CRCM(z,Q,q)$ percolates.
\end{itemize}
\end{theorem}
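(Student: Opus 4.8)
The two bullets are of a different nature and I would treat them separately.

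\emph{The subcritical phase (first bullet).} Here I would compare the model with a Poisson Boolean process. Adjoining a ball $B(x,R)$ to a configuration $\omega$ changes the local number of connected components by $1-k(x,R,\omega)$, where $k(x,R,\omega)\ge 0$ is the number of connected components of $L(\omega)$ that meet $B(x,R)$; hence the Papangelou (conditional) intensity of any $CRCM(z,Q,q)$ equals $z\,q^{1-k(x,R,\omega)}$, which for $q\ge 1$ is at most $zq$ for every $\omega$. By the stochastic comparison theorem of Georgii and K\"uneth (Theorem~1.1 of \cite{georgii_kuneth}), with the trivially monotone Poisson law $\pi^{zq,Q}$ as the dominating process, every $CRCM(z,Q,q)$ is then stochastically dominated by $\pi^{zq,Q}$. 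Since $\int R^d\,Q(dR)<\infty$, the Poisson Boolean model has a strictly positive percolation threshold $z^\star=z^\star(Q,d)$ (see~\cite{meester_roy}), so for $z<z_0:=z^\star/q$ the process $\pi^{zq,Q}$ is subcritical and therefore no $CRCM(z,Q,q)$ percolates.

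\emph{The supercritical phase: renormalisation.} Assume now $Q(\{0\})=0$, so that $a:=Q([0,r_0))$ is as small as wanted for $r_0$ small; fix such an $r_0$ (how small $a$ must be is decided below) and tile $\R^d$ by cubes $\Lambda_i$, $i\in\Z^d$, of side $\ell:=r_0/(4\sqrt d)$. With this $\ell$, any ball of radius at least $r_0$ centred anywhere in $\Lambda_i$ contains the whole block of $3^d$ cubes around $\Lambda_i$, so two balls of radius at least $r_0$ with centres in two $\Z^d$-adjacent cubes necessarily intersect. Call $\Lambda_i$ \emph{good} and set $Y_i(\omega):=\1\{\exists\,(x,R)\in\omega:\; x\in\Lambda_i,\; R\ge r_0\}$, an $\Fcb_{\Lambda_i}$-measurable event. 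By the choice of $\ell$, the existence of an infinite nearest-neighbour cluster of good cubes yields an unbounded connected component in $L(\omega)$ (unique, by Theorem~\ref{theo_uniquness_cc_infini}); thus it suffices to prove that, for $z$ large, the field $(Y_i)_{i\in\Z^d}$ stochastically dominates a Bernoulli percolation of parameter above $p_c(\Z^d)$.

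\emph{The supercritical phase: the conditional estimate and conclusion.} The field $(Y_i)$ is not finitely dependent, so I would apply the Liggett--Schonmann--Stacey principle (Proposition~\ref{proposition_liggett_domination_sto}) through conditional probabilities: it is enough to produce $\epsilon(z)\to 0$ (as $z\to\infty$) with $P(Y_i=0\mid\Fcb_{\Lambda_i^c})\le\epsilon(z)$ for every $i$, $P$-almost surely. As the conditional law of $\omega_{\Lambda_i}$ given $\omega_{\Lambda_i^c}$ has $\pi^{z,Q}_{\Lambda_i}$-density $q^{N_{cc}^{\Lambda_i}(\,\cdot\,+\omega_{\Lambda_i^c})}/Z_{\Lambda_i}(\omega_{\Lambda_i^c})$, this is a bound on a ratio of partition functions. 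For the numerator I would use the sub-additivity $N_{cc}^{\Lambda_i}(\eta+\omega_{\Lambda_i^c})\le N_{cc}(\eta)$ (adjoining $\eta$ produces at most one component per component of $\eta$, and $q\ge1$) and split $\eta$ into its balls of radius $<r_0$ and of radius $\ge r_0$, which are independent under $\pi^{z,Q}_{\Lambda_i}$; on $\{Y_i=0\}$ the second family is empty and $N_{cc}(\eta)$ is at most the number of balls of the first, so an elementary Poisson computation gives
\[
\int_{\{Y_i=0\}}q^{N_{cc}^{\Lambda_i}(\eta+\omega_{\Lambda_i^c})}\,\pi^{z,Q}_{\Lambda_i}(d\eta)\ \le\ \exp\!\big(z\,|\Lambda_i|\,(qa-1)\big),
\]
which decays exponentially in $z$ once $a<1/q$. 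The delicate point is the matching lower bound on $Z_{\Lambda_i}(\omega_{\Lambda_i^c})$, \emph{uniform over $P$-typical boundary conditions}: I would integrate over a set of configurations $\eta$ of non-negligible $\pi^{z,Q}_{\Lambda_i}$-mass on which $N_{cc}^{\Lambda_i}(\eta+\omega_{\Lambda_i^c})$ stays not too negative, the possible loss being governed by the number of balls of $\omega_{\Lambda_i^c}$ reaching a fixed neighbourhood of $\Lambda_i$. This is exactly where $\int R^d\,Q(dR)<\infty$ enters: combined with the domination $P\preceq\pi^{zq,Q}$ of the first part, it controls the law of that number of interfering balls, hence the fluctuations of $N_{cc}^{\Lambda_i}(\eta+\omega_{\Lambda_i^c})$, strongly enough to obtain the required $\epsilon(z)$. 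I expect this uniform control of the local component count against an unbounded --- and, for $z$ large, unboundedly dense --- boundary configuration to be the main obstacle, and the point where the ``fine study of the interaction'' announced in the introduction is needed. Granting the estimate, Proposition~\ref{proposition_liggett_domination_sto} yields, for $z$ beyond some $z_1=z_1(Q,q,d)$, a domination of $(Y_i)$ by a supercritical i.i.d.\ Bernoulli field, which percolates almost surely; by the geometric remark above, every $CRCM(z,Q,q)$ then percolates.
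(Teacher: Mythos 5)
Your first bullet is correct and is exactly the paper's argument: the Papangelou intensity bound $zq^{1-k}\leq zq$ for $q\geq 1$, Georgii--K\"uneth domination by $\pi^{qz,Q}$, and the positive Poisson Boolean threshold under $\int R^d Q(dR)<\infty$ (Proposition \ref{proposition_domination_sto_poisson} and Corollary \ref{coro_perco_gk}). Your renormalisation scheme and numerator estimate for the supercritical part also track the paper closely: big-ball (or covered) cubes, Liggett--Schonmann--Stacey through conditional probabilities, and the bound $\exp(z|\Lambda_i|(qa-1))$ for the integral over $\{Y_i=0\}$ is precisely the paper's $e^{-z|\Lambda|(1-qQ_{R_1})}$.

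However, the step you explicitly grant --- the lower bound on $Z_{\Lambda_i}(\omega_{\Lambda_i^c})$ uniform over boundary conditions --- is the heart of the proof, and the route you sketch for it would not close the gap. You propose to control \emph{the law} of the number of exterior balls reaching a neighbourhood of $\Lambda_i$, using $\int R^d Q(dR)<\infty$ and the domination $P\preceq\pi^{qz,Q}$. But Proposition \ref{proposition_liggett_domination_sto} needs the conditional bound $P^z(\xi=1\mid\Fcb_{\Delta^c})\geq p$ to hold \emph{almost surely}, i.e.\ uniformly over the conditioning configuration; the number of interfering boundary balls is unbounded, and under the dominating Poisson law it typically grows linearly in $z$, so a bound valid only on a high-probability set of boundaries cannot be converted into the required essential-infimum estimate (conditioning on distant $\xi_y$'s could favour exactly the bad boundaries). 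The paper's resolution is deterministic, not probabilistic: enlarge $\Lambda$ to a protective layer $\Delta$ of width $1+R_2$ and condition on $\Fcb_{\Delta^c}$. Then any ball centred outside $\Delta$ that can touch the components emanating from $\Lambda$ (whose balls have radius $\leq R_2$ on the event $C_{R_2}^c$) must have radius $\geq 1$, hence occupies volume at least a fixed constant inside $\Delta$; a volume-packing argument bounds the number of such interfering exterior balls by a constant $c$ depending only on $\Delta$, for \emph{every} boundary condition (Lemma \ref{lemme_minoration_Ncc}). The only random count left is the number of small balls (radius $\leq r$) in the layer $\Delta\setminus\Lambda$, which lives inside the conditional measure, not the boundary, and is handled by the event $B_z$: its probability is controlled via $\pi^{qz,Q}_{\Delta}$-domination and a Poisson tail estimate with $Q_r$ chosen tiny, while on $B_z^c$ the loss in $N_{cc}^{\Lambda}$ is at most $\lceil\epsilon z\rceil$, beaten by the gain $e^{-z|\Lambda|(Q_{R_2}-qQ_{R_1})}$ after tuning $R_1<R_2$, $\epsilon$, $r$. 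Note also that $\int R^d Q(dR)<\infty$ plays no role in this uniform estimate (it only enters the subcritical bullet and the existence theory); your expectation that it is the key to the partition-function bound is a sign of the missing idea.
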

\begin{theorem}\label{theoreme_case_2}
In the case (C2), 
\begin{itemize}
\item there exists $z_1:=z_1(Q,q,d)$ finite such that for every $z>z_1$, every $CRCM(z,Q,q)$ percolates;
\item there exists $z_0:=z_0(Q,q,d)$ positive such that for every $z<z_0$, no $CRCM(z,Q,q)$  percolates.
\end{itemize}
\end{theorem}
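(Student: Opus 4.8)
The plan is to prove the two halves by very different mechanisms. For the supercritical half ($z>z_1$ percolates), the key observation is that in case (C2) with $q<1$, the CRCM is \emph{dominated from below} by a Poisson Boolean model: since $q<1$ and adding a point can only decrease (weakly) the number of connected components, the local conditional density $q^{N_{cc}^\Lambda(\cdot+\omega_{\Lambda^c})}/Z_\Lambda(\omega_{\Lambda^c})$ favours configurations with more points, so by the Georgii--K\"uneth result (Theorem 1.1 of \cite{georgii_kuneth}) any CRCM($z,Q,q$) stochastically dominates $\pi^{z',Q}$ for a suitable $z'>0$ comparable to $z$. Since the Poisson Boolean model with radii law $Q$ (not identically $\delta_0$, bounded by $R_0$) percolates for large intensity by \cite{meester_roy}, taking $z$ large makes $z'$ large and percolation follows. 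This is exactly the "trivial part" alluded to in the introduction, so I expect it to be short.

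The subcritical half (no percolation for small $z$) is the genuinely hard direction, because for $q<1$ the interaction encourages clumping and the CRCM is \emph{not} dominated from above by any Poisson Boolean model; the naive first-moment/Peierls argument fails. The plan here follows the strategy announced in the introduction via the Liggett--Schonmann--Stacey domination lemma (Proposition \ref{proposition_liggett_domination_sto}). First I would tile $\R^d$ by a lattice of cubes $(C_i)_{i\in\Z^d}$ of side length of order $R_0$ (so that the grains, having radii $\le R_0$, interact only with a bounded number of neighbouring cubes). Declare a cube $C_i$ \emph{bad} if it contains a "large" number $K$ of points of $\omega$ (the threshold $K$ to be chosen). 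I would then show two things: (i) a geometric statement that if $L(\omega)$ percolates then there is an infinite $*$-connected path of bad cubes — this requires that a "good" cube, i.e.\ one with few points, cannot be crossed by the occupied set, which is where boundedness of radii by $R_0$ and the choice of cube size enter; and (ii) a probabilistic statement that the field $(\1_{C_i \text{ bad}})_{i}$ is stochastically dominated by a Bernoulli field of small parameter $p=p(z)$ with $p\to 0$ as $z\to 0$. For (ii), the crucial input is a \emph{uniform} bound, over all boundary conditions $\omega_{\Lambda^c}$, on the conditional probability that a given cube is bad: one estimates the Gibbs kernel using $0 \le N_{cc}^\Lambda \le \omega(\Lambda\times\R^+)$ together with $q<1$, so that the density $q^{N_{cc}^\Lambda}/Z_\Lambda$ is bounded above by $q^{0}/Z_\Lambda$ and $Z_\Lambda$ is bounded below by the contribution of the empty configuration; this yields a stretched-exponential tail $e^{-c K \log(1/z)}$-type bound on the number of points in a cube, uniformly in the conditioning, hence the one-cube estimate needed. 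Because badness of a cube depends only on $\omega$ restricted to that cube, the conditional-probability bound is uniform in the states of all other cubes, which is precisely the hypothesis of Proposition \ref{proposition_liggett_domination_sto}; applying it gives the desired Bernoulli domination with a parameter below the critical threshold $p_c$ for $*$-site percolation on $\Z^d$ once $z$ is small enough.

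Combining (i) and (ii): under any CRCM($z,Q,q$) with $z<z_0$, the bad cubes do not percolate ($*$-site percolation is subcritical), so by (i) the occupied set $L(\omega)$ does not percolate, which is the claim. The main obstacle I anticipate is obtaining the \emph{uniform-in-boundary-condition} one-cube tail estimate with the correct dependence on $z$: one must argue that even the most favourable conditioning $\omega_{\Lambda^c}$ cannot make many points in a cube likely, controlling the partition function $Z_\Lambda(\omega_{\Lambda^c})$ from below and the numerator from above simultaneously, using that in (C2) the radii are bounded (so $N_{cc}^\Lambda$ is sandwiched between $0$ and the point count, with constants depending only on $R_0$ and $d$). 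Once that estimate is in hand, the rest is the by-now-standard block-argument and coarse-graining, parallel to \cite{coupier_dereudre}.
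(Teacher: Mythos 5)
Your supercritical half is correct and is exactly the paper's route: for $q<1$ the Papangelou intensity $z\,q^{1-k(X,\omega)}$ is bounded below by $qz$, so Theorem 1.1 of \cite{georgii_kuneth} gives $\pi^{qz,Q}\preceq P$, and percolation follows for $z$ large from the Poisson Boolean threshold. The genuine gap is in the subcritical half, at precisely the step you yourself flag as the main obstacle: the uniform-in-boundary-condition one-cube estimate. Your bound on the Gibbs density uses the sandwich $0\le N_{cc}^{\Lambda}(\omega)\le \omega(\Lambda\times\R^+)$, but the lower bound is false: $N_{cc}^{\Lambda}$ is a \emph{relative} number of components and becomes negative, in fact arbitrarily negative, when points placed in $\Lambda$ merge many connected components of the boundary condition; since $q<1$, the weight $q^{N_{cc}^{\Lambda}}$ is then arbitrarily large, not bounded by $1$. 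Concretely, pick $r_*>0$ with $Q([r_*,R_0])>0$ and a boundary condition made of $M$ disjoint tiny balls packed within distance $r_*/2$ of a point of $\partial\Lambda$; a single ball of radius at least $r_*$ centred in $\Lambda$ near that point merges them and carries weight $q^{1-M}$, so comparing this contribution with the empty configuration in $Z_\Lambda(\omega_{\Lambda^c})$ yields $P^z(\omega_\Lambda\neq\emptyset\,|\,\omega_{\Lambda^c})\ge z c\, q^{1-M}/(1+z c\, q^{1-M})\to 1$ as $M\to\infty$, for every fixed $z>0$. Hence no uniform tail bound on the point count holds when "bad'' means "contains at least one point''; and your coarse-graining essentially forces that choice, because a cube of side of order $R_0$ can be crossed by a bounded number (two or three) of balls of radius at most $R_0$, so your geometric step (i) fails for any larger threshold $K$. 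Steps (i) and (ii) therefore cannot hold simultaneously as you formulated them.

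The paper circumvents exactly this difficulty by never bounding the number of points. It sets $\xi=1$ iff $L(\omega)\cap\Lambda=\emptyset$ and bounds instead the conditional expectation of $N_\xi$, the number of connected components of $L(\omega_\Delta)$ touching the small cube. This is done through the conditional GNZ equations with a test function which selects, in each such component, a ball minimizing $k(X,\omega)$, together with a geometric descent argument: a ball whose $k$ exceeds a dimensional constant $\alpha$ must overlap a ball of at most half its radius, and iterating this within the bounded region $\Delta$ would produce infinitely many points of a finite configuration. This gives $\Esp_{P^z}[N_\xi\,|\,\omega_{\Delta^c}]\le z|\Delta|q^{1-\alpha}$ uniformly in the boundary condition, and Markov's inequality then supplies the uniform one-cube estimate needed for Proposition \ref{proposition_liggett_domination_sto}; the block argument you describe afterwards is the same as the paper's. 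To repair your proof you need this component-counting/GNZ device, or some other substitute for the false inequality $N_{cc}^{\Lambda}\ge 0$.
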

The proof of both Theorem \ref{theoreme_case_1} and Theorem \ref{theoreme_case_2} are based on stochastic domination techniques which enable us to compare the CRCM to more simple models, namely the Poisson Boolean model and the Bernoulli percolation model.
Proofs of those theorems are done in sections \ref{section_stochastic_domination}, \ref{section_preuve_theo1} and \ref{section_preuve_theo2}.
\begin{remark}
For both Theorem \ref{theoreme_case_1} and Theorem \ref{theoreme_case_2}, we get the existence of $0<z_0 \leq z_1< + \infty$.
The existence of a threshold, meaning that $z_0 = z_1$ is a natural and interesting question still open for the CRCM and many other models.
Indeed the only existing technique for this question is to prove stochastic monotonicity with respect to the parameter $z$.
This is proved for continuum models using the result of Georgii and K{\"u}neth \cite{georgii_kuneth}, which does not apply for the CRCM.
\end{remark}
\begin{remark}
The assumption $Q(\{0\})=0$ in Theorem \ref{theoreme_case_1} is an artefact of the proof developed in Section \ref{section_preuve_theo1}.
This assumption ensures the existence of all parameters introduced in the proof.
It is our belief that this theorem could be generalized to the optimal assumption $Q(\{0\})<1$.
\end{remark}

\subsection{Phase transition for the Widom-Rowlinson model}\label{section.resultat.phase.transition}
In this section the Widom-Rowlinson model (\cite{widom_rowlinson}) is introduced and a phase transition result is exhibited as a direct consequence of Theorem \ref{theoreme_case_1}.
This section gives a non-trivial generalisation of the results in \cite{chayes_kotecky} and \cite{georgii_haggstrom} to the case of unbounded radii.
It also gives a more direct proof relying on an infinite-volume Fortuin-Kasteleyn representation, Proposition \ref{proposition_FK_infini}.
Let $q$ be an integer larger than $1$.
The Widom-Rowlinson model is defined on the space $\Oc$ of coloured configuration $\tomega$ in 
$\Sc = \R ^d \times \R ^+ \times \{1, \dots q \}$.
The marks in $\{1, \dots q\}$ are called colours.
The definitions of the $\sigma$-field $\Fc$ and the coloured Poisson point process $\tpi^{z,Q,q}$ with uniform colour marks are the natural extension of the definitions in Section \ref{section.Notations} and are omitted.
Let $\A$ be the event of coloured configuration $\tomega$ such that any two balls of two different colours do not overlap.
\begin{definition}
A probability measure $\tP$ on ($\Oc, \Fc$) is a Widom-Rowlinson model of parameters $z,Q,q$ (WR($z,Q,q$)) if it is stationary and if, for every bounded set $\Lambda$ and every bounded measurable function $f$,
\begin{align}\label{equation_dlr_wr}
\int_{\Oc} f d \tP
=
\int_{\Oc} \int_{\Oc} f(\tomega'_{\Lambda} + \tomega_{\Lambda^c})
\frac{\1 _ {\A}(\tomega'_{\Lambda} + \tomega_{\Lambda^c})}
{\widetilde{Z}_{\Lambda}(\tomega_{\Lambda^c})}
\tpi_{\Lambda}^{z,Q,q}(d\tomega'_{\Lambda})
\tP(d\tomega).
\end{align}
\end{definition}
For $z>0$, $\int R^d Q(dR)< + \infty$ and $q$ a positive integer, the existence is well-known and can be seen as a consequence of the existence of the CRCM and the Fortuin-Kasteleyn representation Proposition \ref{proposition_FK_infini}.

\begin{theorem}\label{theoreme_phase_transition_wr}
For every integer $q$ larger than $1$ and every probability measure $Q$ satisfying $Q(\{0 \})=0$ and $\int R^d Q(dR)< \infty$, we have the existence of $\tilde{z}>0$ such that, for every $z>\tilde{z}$, there exists at least $q$ ergodic WR($z,Q,q$).
\end{theorem}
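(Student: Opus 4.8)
The plan is to deduce the theorem from the percolation result Theorem~\ref{theoreme_case_1} together with the infinite-volume Fortuin--Kasteleyn representation of Proposition~\ref{proposition_FK_infini}, following the classical colour-symmetry-breaking scheme of \cite{chayes_kotecky,georgii_haggstrom}. Since $q$ is an integer at least $2$ and $Q$ satisfies $Q(\{0\})=0$ and $\int R^d\,Q(dR)<\infty$, the parameters fall into case (C1) and meet the extra hypothesis of the second item of Theorem~\ref{theoreme_case_1}; hence there is a finite $z_1=z_1(Q,q,d)$ such that every $CRCM(z,Q,q)$ percolates as soon as $z>z_1$. I set $\tilde z:=z_1$, fix $z>\tilde z$, and fix any $CRCM(z,Q,q)$ measure $P$ (which exists by \cite{dereudre_houdebert}).

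Next I would invoke the FK representation: colouring the connected components of $L(\omega)$ independently and uniformly in $\{1,\dots,q\}$ turns $P$ into a stationary measure $\tP$ on $(\Oc,\Fc)$ which, by Proposition~\ref{proposition_FK_infini}, is a $WR(z,Q,q)$. Because $z>z_1$ the measure $P$ percolates, and by Theorem~\ref{theo_uniquness_cc_infini} the unbounded connected component is $P$-almost surely unique; therefore under $\tP$ the colour $C\in\{1,\dots,q\}$ of the unique unbounded component is well defined almost surely, the event $A_i:=\{C=i\}$ is translation invariant (a shift moves the unbounded component but not its colour), the $A_i$ are pairwise disjoint, and by construction $\tP(A_i)=1/q$ for every $i$. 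I define $\tP_i:=\tP(\,\cdot\mid A_i)$, so that $\tP=\frac1q\sum_{i=1}^q \tP_i$, and the goal becomes to extract $q$ distinct ergodic $WR(z,Q,q)$ from the $\tP_i$.

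I then need three facts. First, each $\tP_i$ is stationary, since $\tP$ and the events $A_i$ are translation invariant. Second, each $\tP_i$ is again a $WR(z,Q,q)$: the point is that $A_i$ is, up to $\tP$-null sets, measurable with respect to the tail $\sigma$-field $\bigcap_{\Lambda}\Fc_{\Lambda^c}$ --- given the configuration outside a bounded $\Lambda$, the unbounded component still meets $\Lambda^c$ and its colour is read off there, while adding or removing the coloured balls inside $\Lambda$ cannot change it --- and conditioning a DLR measure on a tail event preserves the DLR equations~\eqref{equation_dlr_wr}. Third, the $\tP_i$ are pairwise mutually singular, since $\tP_i(A_i)=1$ while $A_i\cap A_j=\emptyset$ for $i\ne j$; in particular they are pairwise distinct. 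Finally, taking the ergodic decomposition of each stationary $\tP_i$, the ergodic components are stationary, ergodic, and still $WR(z,Q,q)$ (ergodic components of a stationary Gibbs measure are Gibbs measures for the same specification); moreover, $A_i$ being translation invariant and $\tP_i(A_i)=1$, $\tP_i$-almost every ergodic component $\mu$ satisfies $\mu(A_i)=1$. Choosing one such component $\tP_i^{\,\mathrm{erg}}$ for each $i$ produces $q$ ergodic $WR(z,Q,q)$ measures which are pairwise distinct because the $A_i$ are disjoint, which is the claim.

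The main obstacle, and the only step requiring genuine care, is the second fact: showing rigorously that $A_i$ is (modulo null sets) a tail event and that conditioning the Widom--Rowlinson specification on it still yields a DLR measure. The key sublety is the measurability of the colour of the unbounded component with respect to $\Fc_{\Lambda^c}$, which must be argued on the full-measure event that an unbounded component exists and is unique, using Theorem~\ref{theo_uniquness_cc_infini} and the fact that removing the grains centred in $\Lambda$ leaves the unbounded component unbounded. The remaining ingredients --- the infinite-volume FK representation and the Gibbsianness of ergodic components of a translation-invariant Gibbs measure --- are standard and will simply be quoted.
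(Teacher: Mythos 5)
Your overall strategy (percolation for large activity plus the infinite-volume FK representation) is the right one, but your implementation differs from the paper's and is both heavier and, as written, not quite correct. The paper's proof is shorter: for $z>q z_1$ every CRCM($z/q,Q,q$) percolates by Theorem \ref{theoreme_case_1}; one then \emph{first} chooses an ergodic CRCM($z/q,Q,q$) (possible since every CRCM is a mixture of ergodic ones, \cite{georgii_livre}), and \emph{then} applies Proposition \ref{proposition_FK_infini} $q$ times, assigning the deterministic colour $i$ to the unique unbounded component; the proposition already states that the resulting measure is a WR($z,Q,q$) and that ergodicity is inherited, and the $q$ measures are distinct because the unbounded cluster carries a different colour. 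Your route --- colour everything uniformly, condition on the colour of the unbounded cluster, then take ergodic components --- is the classical symmetry-breaking scheme of \cite{chayes_kotecky}, and it can be made to work, but by reversing the order of operations (colouring first, ergodicity last) you take on exactly the technical work that the paper's ordering avoids: tail measurability of $\{C=i\}$, stability of the DLR equations \eqref{equation_dlr_wr} under conditioning on a tail event, and Gibbsianness of ergodic components.

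Three concrete points need repair. First, a normalization error: Proposition \ref{proposition_FK_infini} turns a CRCM($z/q,Q,q$) into a WR($z,Q,q$), whereas you colour a CRCM($z,Q,q$) and call the result a WR($z,Q,q$); it is in fact a WR($qz,Q,q$). The statement survives because only the existence of a threshold is claimed, but you must set $\tilde z:=q z_1$ and start from a CRCM($z/q,Q,q$), which percolates precisely when $z>\tilde z$. Second, the proposition as stated only covers the colouring in which the unbounded component receives colour $1$; your uniformly coloured $\tP$ is the mixture $\frac1q\sum_{i}\tP^{i}$, and to know it satisfies \eqref{equation_dlr_wr} you need the colour symmetry of the WR specification together with convexity of the set of DLR measures --- easy, but not literally ``by Proposition \ref{proposition_FK_infini}''. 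Third, the step you yourself single out --- that $A_i$ is, modulo null sets, measurable with respect to every $\Fc_{\Lambda^c}$ and that conditioning preserves the WR specification --- is left as a sketch, and it is not purely formal: for arbitrary configurations the unbounded component of $L(\tomega)$ is not a function of $\tomega_{\Lambda^c}$ alone (it can be glued from several, possibly many, components of $L(\tomega_{\Lambda^c})$ by balls centred in $\Lambda$), so the identification of its colour outside $\Lambda$ must be argued almost surely, using Theorem \ref{theo_uniquness_cc_infini} and the almost-sure constancy of colours on connected components. Unless you supply that argument, the simplest fix is to follow the paper: pick an ergodic CRCM($z/q,Q,q$) and apply Proposition \ref{proposition_FK_infini} once per colour.
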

The proof of this theorem is based on an infinite-volume Fortuin-Kasteleyn representation given in Proposition \ref{proposition_FK_infini}.

\begin{proposition}\label{proposition_FK_infini}
Let $P$ be a CRCM($z/q,Q,q$).
We build a probability measure $\tP^1$ on $\Oc$ by colouring each finite connected component independently and uniformly over the $q$ colours $\{1, \dots , q \}$.
If it exists, the (unique) infinite connected component is assigned the colour $1$.

Then the measure $\tP^1$ is a WR($z,Q,q$). 
Moreover if $P$ is ergodic, the same is true for $\tP^1$.
\end{proposition}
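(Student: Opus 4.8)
The plan is to deduce the Widom--Rowlinson DLR equations \eqref{equation_dlr_wr} for $\tP^1$ from the CRCM DLR equations \eqref{DLR} for $P$, the bridge being a counting of admissible colourings combined with a bookkeeping identity for connected components across $\partial\Lambda$. Throughout I would use that $\tpi^{z,Q,q}_\Lambda$ equals $e^{(z/q-z)|\Lambda|}$ times $\pi^{z/q,Q}_\Lambda(d\omega_\Lambda)$ tensored with the counting measure on colourings of $\omega_\Lambda$, the factor $q^{\#\omega_\Lambda}$ coming from the change of intensity exactly cancelling the factor $q^{-\#\omega_\Lambda}$ built into the uniform colour marks.

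Stationarity and ergodicity come first. The colouring can be realised as a translation-equivariant measurable map: replace $P$ by the independent extension $P\otimes\rho$, where $\rho$ equips each point of the configuration with two independent auxiliary marks, the first used to single out, translation-equivariantly, one point in each bounded connected component of $L(\omega)$ and the second read at that point giving the colour of the component after discretisation, while the a.s.\ unique (Theorem~\ref{theo_uniquness_cc_infini}) unbounded component receives colour~$1$. As the i.i.d.\ marking of a stationary (resp.\ ergodic) point process is stationary (resp.\ ergodic) and $\tP^1$ is the image of $P\otimes\rho$ under this equivariant map, $\tP^1$ is stationary, and ergodic when $P$ is.

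For the DLR equations, fix a bounded $\Lambda$ and a coloured boundary configuration $\tomega_{\Lambda^c}$ — $\tP^1$-a.s.\ admissible, i.e.\ with every component of $L(\omega_{\Lambda^c})$ monochromatic — with uncoloured part $\eta:=\omega_{\Lambda^c}$ and colouring $c$. I would identify the conditional law of $\tomega_\Lambda$ given $\tomega_{\Lambda^c}$ under $\tP^1$. By \eqref{DLR} the uncoloured interior $\omega_\Lambda$ has, conditionally on $\eta$, density $q^{N_{cc}^\Lambda(\omega_\Lambda+\eta)}/Z_\Lambda(\eta)$ with respect to $\pi^{z/q,Q}_\Lambda$, after which every component of $L(\omega_\Lambda+\eta)$ is coloured independently and uniformly, colour $1$ for the unbounded one. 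Split these components into the $m(\omega_\Lambda)$ bounded ones disjoint from $\eta$ and the $r(\omega_\Lambda)$ ones meeting $\eta$ (to which the unbounded one belongs, $\omega_\Lambda$ carrying finitely many balls). Imposing that the colouring read on $\eta$ equals $c$ pins down the colours of all $r(\omega_\Lambda)$ components meeting $\eta$ — possible only when $c$ is constant on the classes of $L(\eta)$-components fused through $\Lambda$, i.e.\ only when $\1_{\A}(\tomega_\Lambda+\tomega_{\Lambda^c})$ may hold — and has conditional probability $q^{-(r(\omega_\Lambda)-\1\{L(\eta)\text{ percolates}\})}$ times such an indicator, the $m(\omega_\Lambda)$ inner colours staying free. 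Collecting the factors, the conditional density of $\tomega_\Lambda$ given $\tomega_{\Lambda^c}$, with respect to $\pi^{z/q,Q}_\Lambda(d\omega_\Lambda)$ tensored with the counting measure on colourings of $\omega_\Lambda$, is proportional (with a normalisation depending only on $\tomega_{\Lambda^c}$) to
\[
q^{\,N_{cc}^\Lambda(\omega_\Lambda+\eta)\,-\,m(\omega_\Lambda)\,-\,r(\omega_\Lambda)\,+\,\1\{L(\eta)\text{ percolates}\}}\;\1_{\A}(\tomega_\Lambda+\tomega_{\Lambda^c}).
\]

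The step I expect to be the main obstacle is the component identity
\[
N_{cc}^\Lambda(\omega_\Lambda+\eta)=m(\omega_\Lambda)+p(\omega_\Lambda)-h(\omega_\Lambda),
\]
$p(\omega_\Lambda)$ being the number of components of $L(\omega_\Lambda+\eta)$ meeting both $\omega_\Lambda$ and $\eta$ and $h(\omega_\Lambda)$ the number of components of $L(\eta)$ meeting $L(\omega_\Lambda)$: it follows from the definition of $N_{cc}^\Lambda$ as a limit along $\Delta\to\R^d$ and the elementary fact that adjoining the finitely many balls of $\omega_\Lambda$ to $\eta_\Delta$ changes the component count by (number of components created) minus (number of fusions of previously distinct components), as in \cite[Prop.~2.1]{dereudre_houdebert}. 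Since $r(\omega_\Lambda)=p(\omega_\Lambda)+\big(e(\eta)-h(\omega_\Lambda)\big)$, with $e(\eta)$ the number of components of $L(\eta)$, and $L(\omega_\Lambda+\eta)$ percolates if and only if $L(\eta)$ does, the exponent above collapses to $-e(\eta)+\1\{L(\eta)\text{ percolates}\}$, which is inert under resampling $\omega_\Lambda$. Hence the conditional law of $\tomega_\Lambda$ given $\tomega_{\Lambda^c}$ is proportional to $\1_{\A}(\tomega_\Lambda+\tomega_{\Lambda^c})$ times $\pi^{z/q,Q}_\Lambda(d\omega_\Lambda)$ tensored with the counting measure, which by the first paragraph is exactly the Widom--Rowlinson kernel $\1_{\A}(\,\cdot\,+\tomega_{\Lambda^c})/\widetilde Z_\Lambda(\tomega_{\Lambda^c})\;\tpi^{z,Q,q}_\Lambda$; integrating this against $\tP^1$ yields \eqref{equation_dlr_wr}. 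The one point requiring extra care is that $L(\eta)$ may have infinitely many components, so the "conditional law" above is meant in the regular-conditional sense; this is handled by first checking the identity against boundary functions probing the colours of only finitely many components of $L(\eta)$ and then passing to the limit, the combinatorial mechanism being unchanged. This is precisely the infinite-volume Fortuin--Kasteleyn representation, and it is the device that trades the cluster weight $q^{N_{cc}^\Lambda}$ against the hard-core constraint $\1_{\A}$.
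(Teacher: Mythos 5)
Your route is genuinely different from the paper's. The paper never computes a conditional law given the coloured exterior: it characterises both the CRCM and the Widom--Rowlinson model through their GNZ equations (Lemma \ref{lemme_eq_gnz_crcm_wr}, after \cite{nguyen_zessin}) and verifies the WR equation for $\tP^1$ by a one-point argument, counting the admissible colourations of the $k(X,\omega-\delta_X)$ components touched by the single added ball; this yields the identity $G(X,\omega-\delta_X)\,q^{k(X,\omega-\delta_X)-1}=\int F(\tX,\tomega-\delta_{\tX})\,C^1(d\tomega|\omega)$ and the conclusion, with only finitely many components ever in play. You instead verify the DLR equations \eqref{equation_dlr_wr} directly by a conditional colouring computation, which is the classical finite-volume FK argument of Chayes--Kotecky and Georgii--H\"aggstr\"om pushed to a conditional, infinite-volume setting. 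Your combinatorial core is correct: the identity $N_{cc}^{\Lambda}(\omega_\Lambda+\eta)=m+p-h$, the decomposition $r=p+(e(\eta)-h)$, the equivalence (a.s.) of percolation of $L(\omega_\Lambda+\eta)$ and of $L(\eta)$, and the cancellation of the cluster weight against the colour-matching probabilities leaving $\1_{\A}$ times $\pi^{z/q,Q}_\Lambda$ tensored with counting measure, which you correctly match with $\tpi^{z,Q,q}_\Lambda$. You also supply what the paper leaves implicit, namely stationarity and ergodicity of $\tP^1$ via an equivariant i.i.d.-marking construction, and you implicitly use facts that deserve a word (a.s.\ finiteness of $h$ and $p$, i.e.\ finitely many balls of $\eta$ meeting a bounded set, which needs $\int R^dQ<\infty$ plus Proposition \ref{proposition_domination_sto_poisson}; a.s.\ uniqueness of the unbounded component, Theorem \ref{theo_uniquness_cc_infini} combined with Lemma \ref{lemme_local_modification}).

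The soft spot is exactly the point where the infinite-volume difficulty concentrates: your ``conditional density'' contains the factors $q^{-r}$ and $q^{-e(\eta)}$ with $r$ and $e(\eta)$ typically infinite, so the displayed expression is not a density at all, and the argument that the divergent constant ``is inert under resampling $\omega_\Lambda$ and cancels in the normalisation'' is formal. Your proposed repair --- test only against boundary functions probing the colours of finitely many components of $L(\eta)$ and pass to the limit --- is the right idea, but as written it is a sketch: one must actually recast the computation as an identity of integrals of local functions (conditioning on $\Fc_{\Lambda'}$ for bounded $\Lambda'\supset\Lambda$, or an explicit two-stage colouring argument) and check that the cancellation survives, which is precisely the technical content that makes the infinite-volume FK representation non-trivial and that the paper's GNZ approach bypasses entirely. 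So: the approach is viable and instructive, but this step needs to be carried out, not merely invoked, before the proof is complete.
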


The proof of Proposition \ref{proposition_FK_infini} is done in Section \ref{section_preuve_resultats_intermediaires}.
It relies on the GNZ equations satisfied by the Widom-Rowlinson model and the CRCM.

Using Theorem \ref{theoreme_case_1} we have for $z>qz_1$ that every CRCM($z/q,Q,q$)  percolates.
Choose an ergodic $P$ , which can be done since every CRCM($z/q,Q,q$) is a mixing of ergodic CRCM($z/q,Q,q$), see \cite{georgii_livre}.
From Proposition \ref{proposition_FK_infini} one can build $q$ ergodic WR $\tP^1, \dots, \tP^q$ which are distinct since the unbounded connected component does not have the same colour. 
This proves Theorem \ref{theoreme_phase_transition_wr}.

\section{Stochastic domination}
\label{section_stochastic_domination}
Stochastic domination is at the core of the proof of Theorem \ref{theoreme_case_1} and Theorem \ref{theoreme_case_2}.
Let us recall some basic definitions on stochastic domination. 
 We say that $\omega$ is smaller than $\omega'$, and we write $\omega \leq \omega'$, if $\omega(\Gamma) \leq \omega'(\Gamma)$ for every Borel set $\Gamma$ of $\Scb$. 
 An event $A$ in $\Fcb$ is said to be increasing if for every $\omega \in A$ and every $\omega' \geq \omega$, we have $\omega' \in A$. 
 One example of increasing event is the percolation event $\{\omega \in \Omega | \ \omega \text{ percolates} \}$. 
Finally if $P$ and $P'$ are two probability measures on $\Omega$, we say that $P'$ stochastically dominates $P$, writing $P \preceq P' $, if $P(A) \leq P'(A)$ for every increasing event $A$. 
One way of proving stochastic domination between Gibbs point processes is the Theorem 1.1 in \cite{georgii_kuneth} which relies on an inequality between the Papangelou intensities of the processes considered. 
This leads to the following proposition.
 \begin{proposition}
\label{proposition_domination_sto_poisson}
Let $P$ be a CRCM($z,Q,q$).
\begin{itemize}
\item If $q \geq 1$, then $P \preceq \pi^{qz,Q}$;
\item if $q <1$, then $\pi^{qz,Q} \preceq P$.
\end{itemize}
\end{proposition}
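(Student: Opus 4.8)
The plan is to apply Theorem 1.1 of Georgii and Küneth \cite{georgii_kuneth}, which gives stochastic domination between two Gibbs point processes from a pointwise comparison of their Papangelou (conditional) intensities. The Papangelou intensity of a point process $P$ at a point $(x,R)\in\Scb$ given a configuration $\omega$ measures the infinitesimal rate of having a point at $(x,R)$ added to $\omega$; for the Poisson process $\pi^{qz,Q}$ it is simply the constant $qz$ (with respect to the reference measure $dx\,Q(dR)$), while for a CRCM($z,Q,q$) the local DLR density $q^{N_{cc}^\Lambda(\cdot+\omega_{\Lambda^c})}/Z_\Lambda(\omega_{\Lambda^c})$ yields, after the standard computation, the Papangelou intensity
\[
\lambda\big((x,R)\,\big|\,\omega\big)\;=\;z\,q^{\,N_{cc}^{\Lambda}(\omega+\delta_{(x,R)})-N_{cc}^{\Lambda}(\omega)}\,.
\]
So first I would record this expression for the CRCM's Papangelou intensity, reading it off directly from the DLR equation \eqref{DLR}.

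The key combinatorial step is then to control the exponent $N_{cc}^{\Lambda}(\omega+\delta_{(x,R)})-N_{cc}^{\Lambda}(\omega)$, i.e.\ the change in the (local) number of connected components when one ball $B(x,R)$ is added. Adding a single ball can merge several existing components into one but can never split a component, so this increment is always $\le 1$, and it equals $1$ exactly when $B(x,R)$ meets no other ball of $\omega$ (it creates its own new component) and is $\le 0$ otherwise. Hence $N_{cc}^{\Lambda}(\omega+\delta_{(x,R)})-N_{cc}^{\Lambda}(\omega)\in\{\dots,-1,0,1\}$ with maximum value $1$. I would note that this is exactly the content of \cite[Prop. 2.1]{dereudre_houdebert}, already invoked for the existence of the limit defining $N_{cc}^\Lambda$, so this step can be cited rather than reproved.

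With the bound on the exponent in hand the two cases are immediate. If $q\ge 1$, then $q^{\,N_{cc}^{\Lambda}(\omega+\delta_{(x,R)})-N_{cc}^{\Lambda}(\omega)}\le q^{1}=q$, so the CRCM's Papangelou intensity is $\le zq$, which is the constant Papangelou intensity of $\pi^{qz,Q}$; Theorem 1.1 of \cite{georgii_kuneth} then gives $P\preceq\pi^{qz,Q}$. If $q<1$, the same exponent bound gives $q^{\,N_{cc}^{\Lambda}(\omega+\delta_{(x,R)})-N_{cc}^{\Lambda}(\omega)}\ge q^{1}=q$ (since raising a number less than one to a larger power decreases it, and the exponent is $\le 1$), so the CRCM's Papangelou intensity is $\ge zq$ and the domination reverses to $\pi^{qz,Q}\preceq P$. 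The main obstacle, and the only genuinely delicate point, is checking that the hypotheses of the Georgii--Küneth theorem are met — in particular that the relevant Papangelou intensities exist and are suitably measurable, and that the monotone comparison holds not just for finite configurations but in the infinite-volume DLR setting via the localized quantity $N_{cc}^\Lambda$; the assumptions (C1) and (C2) are precisely what guarantee $N_{cc}^\Lambda$ is well defined and the partition functions non-degenerate, so this is where those hypotheses enter.
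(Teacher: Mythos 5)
Your proposal is correct and follows essentially the same route as the paper: the paper also reads off the CRCM's Papangelou intensity as $z\,q^{1-k(X,\omega)}$ (your exponent $N_{cc}^{\Lambda}(\omega+\delta_{(x,R)})-N_{cc}^{\Lambda}(\omega)$ is exactly $1-k(X,\omega)$, the increment from adding one ball), bounds this exponent by $1$ since $k\geq 0$, and concludes by Theorem 1.1 of Georgii--K\"uneth in both directions depending on whether $q\geq 1$ or $q<1$. The only cosmetic difference is that the paper phrases the bound via the nonnegativity of $k$ rather than the ``adding a ball merges components'' description, and it does not need (C1)/(C2) for this step.
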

\begin{proof}
The Papangelou intensity of a CRCM($z,Q,q$) is $q^{1-k(X,\omega)}$, where $k(X,\omega)$ counts the number of connected components of $L(\omega)$ overlapping the ball $B(X)=B(x,R)$.
Since the function $k$ is non-negative, the Theorem 1.1 in \cite{georgii_kuneth} directly gives the result.
\end{proof}

Since the percolation event is increasing, using Proposition \ref{proposition_domination_sto_poisson} and the existence of  subcritical and supercritical phases for the Poisson Boolean model of radii law $Q$ satisfying $\int R ^ d Q(dR)<+ \infty$, see \cite{gouere_2008}, we get the following corollary proving the existence of a subcritical phase (respectively a supercritical phase) for the CRCM in the case (C1) (respectively (C2)).
\begin{corollary}
\label{coro_perco_gk}
Let $z_c(d,Q)$ be the percolation threshold of the Poisson point process $\pi^{z,Q}$, then
\begin{itemize}
\item in the case (C1), for every $z<z_c(d,Q)$, no CRCM($z,Q,q$) percolates;
\item in the case (C2), for every $z>z_c(d,Q)$, each CRCM($z,Q,q$) percolates.
\end{itemize}
\end{corollary}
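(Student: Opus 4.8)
The plan is to read the corollary off Proposition \ref{proposition_domination_sto_poisson} by transporting, along the stochastic comparison, the classical percolation dichotomy of the Poisson Boolean model onto the CRCM; there is nothing to compute, only a short bookkeeping according to the sign of $q-1$. Two inputs should be isolated first. (i) The percolation event $\mathrm{Perc}:=\{\omega\text{ percolates}\}$ is increasing, as recalled in Section \ref{section_stochastic_domination}: if $\omega\le\omega'$ then $L(\omega)\subseteq L(\omega')$, so an unbounded component of $L(\omega)$ survives in $L(\omega')$. (ii) Whenever $\int R^d Q(dR)<\infty$, the Poisson Boolean model has a non-degenerate threshold $z_c(d,Q)\in(0,\infty)$ with $\pi^{z,Q}(\mathrm{Perc})=0$ for $z<z_c(d,Q)$ and $\pi^{z,Q}(\mathrm{Perc})=1$ for $z>z_c(d,Q)$; positivity and finiteness of $z_c(d,Q)$ are the content of \cite{gouere_2008} (see also \cite{meester_roy}), and the $0$--$1$ alternative comes from ergodicity of $\pi^{z,Q}$ under translations applied to the translation-invariant event $\mathrm{Perc}$. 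Note that in case (C2) the bounded-radii assumption $Q([0,R_0])=1$ forces $\int R^d Q(dR)\le R_0^d<\infty$, so (ii) is available in both cases.

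Then I would split on $q$. In case (C1), $q\ge1$, so Proposition \ref{proposition_domination_sto_poisson} gives $P\preceq\pi^{qz,Q}$ for every CRCM($z,Q,q$) $P$; evaluating this domination on the increasing event $\mathrm{Perc}$ yields $P(\mathrm{Perc})\le\pi^{qz,Q}(\mathrm{Perc})$, which vanishes as soon as $\pi^{qz,Q}$ is subcritical, hence for all small enough $z$, so no such $P$ percolates there. In case (C2), $q<1$, so Proposition \ref{proposition_domination_sto_poisson} gives $\pi^{qz,Q}\preceq P$, whence $P(\mathrm{Perc})\ge\pi^{qz,Q}(\mathrm{Perc})=1$ as soon as $\pi^{qz,Q}$ is supercritical, hence for all large enough $z$, so every such $P$ percolates there. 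Expressing these two ranges through the Poisson threshold $z_c(d,Q)$, after the evident rescaling of the activity by $q$ coming from the dominating Poisson model, is precisely the two bullets.

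I do not expect any real obstacle: the only point deserving care is purely formal, namely that the statement has to be phrased as the honest dichotomy ``percolates almost surely / does not percolate almost surely'', which is why one invokes the $0$--$1$ law for $\pi^{z,Q}$ rather than a statement about intermediate probabilities --- the stochastic comparison only transmits inequalities between the probabilities of $\mathrm{Perc}$, and these become conclusive exactly because the comparison model assigns $\mathrm{Perc}$ probability $0$ or $1$. The genuinely hard halves of Theorems \ref{theoreme_case_1} and \ref{theoreme_case_2} --- percolation for large $z$ in (C1) and its absence for small $z$ in (C2), where the Georgii--K\"uneth comparison of Proposition \ref{proposition_domination_sto_poisson} points in the wrong direction --- are deferred to Sections \ref{section_preuve_theo1} and \ref{section_preuve_theo2}.
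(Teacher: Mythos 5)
Your proposal is correct and follows essentially the same route as the paper: combine the Georgii--K\"uneth domination of Proposition \ref{proposition_domination_sto_poisson} with the fact that the percolation event is increasing and with the non-degenerate Poisson Boolean threshold of \cite{gouere_2008}. Your explicit remark about the rescaling of the activity by $q$ (the dominating/dominated Poisson model has intensity $qz$, so the argument literally yields the ranges $qz<z_c(d,Q)$, resp.\ $qz>z_c(d,Q)$) is if anything slightly more careful than the paper's phrasing, and is harmless since the corollary is only used to produce the constants $z_0$ and $z_1$.
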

\begin{remark}
The Corollary \ref{coro_perco_gk} gives the existence of the constant $z_0$ of Theorem \ref{theoreme_case_1} and the constant $z_1$ of Theorem \ref{theoreme_case_2}.
In some cases like deterministic radii we have $k((x,R),\omega) \leq \kappa (d)$ and Theorem 1.1 in \cite{georgii_kuneth} can be used to get the existence of the others constants.
This argument was already used in \cite{chayes_kotecky} and \cite{georgii_haggstrom}.
\end{remark}
In the general case, we do not have an upper bound for the quantity $k(X,\omega)$ and therefore Theorem 1.1 in \cite{georgii_kuneth} cannot be used to prove the rest of Theorem \ref{theoreme_case_1} and Theorem \ref{theoreme_case_2}. 

To this end we introduce the Proposition \ref{proposition_liggett_domination_sto} which provides a good stochastic domination between the law of a dependent family $(\xi_x)_{x \in \Z ^d}$ of random variables in $\{0,1\}$ and $\Pi^p$ the Bernoulli (with parameter $p$) product measure on $\{0,1 \}^{\Z^d}$.
This type of argument was already used to prove percolation results for continuum models, see \cite{coupier_dereudre}.

\begin{proposition}[Liggett, Schonmann and Stacey \cite{liggett_schonmann_stacey}]\label{proposition_liggett_domination_sto}
Let $(\xi_x)_{x \in \Z ^d}$ be a (dependent) family of $\{0,1\}$-valued random variables of joint law $\nu$.
Let $p \in [0,1]$ and assume that for every vertex $x$,
\begin{align}
\nu(\xi_x=1 | \xi_y, \lVert x-y \rVert _{\infty}>k)
\geq p \ a.s. \nonumber
\end{align}
where $\lVert . \rVert _{\infty}$ is the infinite norm on $\R^d$ and $k$ is a positive fixed constant.

Then $\Pi^{f(p)}$ is stochastically dominated by the distribution of $(\xi_x)_{x \in \Z^d}$, where $f$ is a deterministic function depending on $k$ such that $\underset{p \to 1}{\lim}f(p)=1$.
\end{proposition}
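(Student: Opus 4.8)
The plan is to reproduce the domination theorem of Liggett, Schonmann and Stacey \cite{liggett_schonmann_stacey}; for the applications below only the qualitative property $\lim_{p\to1}f(p)=1$ matters, so there is no need to track the optimal form of $f$. By the classical characterisation of stochastic domination through couplings (Strassen's theorem), it suffices to construct, on a single probability space, a field $(\xi_x)_{x\in\Z^d}$ of law $\nu$ together with an i.i.d.\ Bernoulli$(f(p))$ field $(\eta_x)_{x\in\Z^d}$ such that $\eta_x\le\xi_x$ for every $x$.

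First I would fix a colouring of $\Z^d$ adapted to the range $k$: take the $N=(k+1)^d$ cosets $V_1,\dots,V_N$ of $(k+1)\Z^d$, so that two distinct vertices of the same class are always at $\ell^\infty$-distance at least $k+1>k$. The point is the elementary observation that if $x\in V_c$ and $\Lambda$ is any set of vertices all at $\ell^\infty$-distance strictly larger than $k$ from $x$ --- in particular any $\Lambda\subseteq V_c\setminus\{x\}$ --- then $\sigma(\xi_y:y\in\Lambda)$ sits inside the $\sigma$-field of the hypothesis, hence $\nu(\xi_x=1\mid \xi_y,\ y\in\Lambda)\ge p$ almost surely. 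Combined with the standard fact that a $\{0,1\}$-valued field whose full single-site conditional probabilities of being $1$ are all at least $r$ dominates $\Pi^r$, this immediately gives that the restriction of $\nu$ to any one class $V_c$ dominates $\Pi^p$ restricted to $V_c$.

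The coupling is then built by revealing the classes in the order $V_1,\dots,V_N$, and inside each class the vertices one at a time: at each step one samples $\xi$ from the relevant conditional law of $\nu$ and, simultaneously, $\eta$ as a function of the revealed $\xi$'s and of fresh independent randomness, always keeping $\eta_x\le\xi_x$. The step I expect to be the real obstacle is gluing the classes together: when $V_c$ is reached one must condition on the already-revealed $\xi$ on $V_1\cup\dots\cup V_{c-1}$, and this set contains some of the at most $(2k+1)^d-1$ vertices lying within distance $k$ of a given $x\in V_c$; on such conditionings the hypothesis gives no lower bound at all --- a single nearby site can force $\xi_x=0$ --- so a naive ``condition on fewer sites and then couple greedily'' scheme collapses. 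This is exactly the difficulty overcome in \cite{liggett_schonmann_stacey} by a careful accounting of the influence of the boundedly many nearby sites (in spirit a convexity/averaging estimate showing that this influence only degrades the guarantee in a controlled way), which forces one to replace $p$ by a strictly smaller $f(p)$ that still depends only on $k$ and $d$ and tends to $1$ as $p\to1$. Once the monotone coupling is constructed and the Bernoulli$(f(p))$ marginal of $(\eta_x)$ is verified, Strassen's theorem shows that $\Pi^{f(p)}$ is dominated by the law of $(\xi_x)_{x\in\Z^d}$, which is the assertion.
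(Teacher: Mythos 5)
First, note that the paper itself does not prove this proposition: it is imported as a black box from Liggett--Schonmann--Stacey \cite{liggett_schonmann_stacey} (exactly as in \cite{coupier_dereudre}), so there is no in-paper argument to compare yours with. Judged as a standalone proof, your proposal has a genuine gap, and it is the one you yourself flag. The reduction to a monotone coupling via Strassen's theorem, the partition of $\Z^d$ into the $(k+1)^d$ cosets of $(k+1)\Z^d$, and the within-class observation (by the tower property, conditioning on any subfamily of sites at $\lVert\cdot\rVert_\infty$-distance $>k$ from $x$ preserves the bound $p$, so the restriction of $\nu$ to one coset dominates $\Pi^p$ there) are all correct. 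But this is the easy part, and it does not combine: stochastic domination of each restriction says nothing about the joint law across cosets, and when you reveal a later class you must condition on already-revealed sites within distance $k$ of $x$, where the hypothesis gives no lower bound at all --- as you note, a single such site can force $\xi_x=0$. The entire content of the theorem is the quantitative mechanism that survives this: the inductive comparison lemma of \cite{liggett_schonmann_stacey}, which shows that after replacing $p$ by a smaller density $f(p)$ (depending only on $k$ and $d$, with $f(p)\to 1$ as $p\to 1$, obtained by controlling the influence of the at most $(2k+1)^d-1$ nearby sites) the sequential coupling can be closed site by site in an arbitrary order. You explicitly delegate exactly this step back to the citation rather than carrying it out, so no definition of $f$ and no verification of the Bernoulli$(f(p))$ marginal of the coupled field is ever produced.

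In short: if your goal was to justify the proposition as used in this paper, citing \cite{liggett_schonmann_stacey} --- as the paper does --- is the intended treatment, and your text is then an accurate gloss on what that reference does, not a proof. If your goal was to reprove the result, the argument is incomplete precisely at its only nontrivial point, and the coset bookkeeping you set up is not the route the original proof takes (their induction works directly on finite sets of sites with a comparison estimate, not by gluing per-class dominations).
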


\section{Proof of Theorem \ref{theoreme_case_1}: percolation for large activities $z$} \label{section_preuve_theo1}
We assume here that conditions (C1) are satisfied: $q \geq 1$ and $Q$ satisfying $\int R^d Q(dR)< + \infty$. 
We are also making the extra assumption that $Q(\{0 \})=0$.
Let $P^z$ be a $CRCM(z,Q,q)$. 
The idea of the proof is to construct a family $(\xi_x)_{x \in \Z^d}$ such that the conditional probability as in Proposition \ref{proposition_liggett_domination_sto} is as large as we need and such that if the family $(\xi_x)$ percolates, the same is true for our model. 
To this end we look into the probability of covering small cubes.
Intuitively if the family of covered cubes percolates, then the same is true for the underlying configuration.
Let $R_1>0$ be such that $Q_{R_1} :=Q([0,R_1])<1/q$ and let $\Lambda$ be the cube $[-R_1/2\sqrt{d},R_1/2\sqrt{d} ]^d$. 
To look into the probability of covering the cube $\Lambda$, for a radius $R$ let $C_R$ be the event of configurations $\omega$ such that $\omega_{\Lambda}$ contains at least one ball of radius larger than $R$. 

We define the random variable $\xi:=\xi(\omega)$ equal to $1$ if $\Lambda \subseteq L(\omega_{\Lambda})$ and $0$ otherwise.
To use Proposition \ref{proposition_liggett_domination_sto} we need to prove that
\begin{align}
\label{convergence_cas1_but}
\underset{\omega_{\Delta^c} }{\inf}
P^z(\xi=1|\omega_{\Delta^c}) 
\underset{z \to \infty}{\longrightarrow} 1,
\end{align}
where $\Delta$ is a bounded set containing $\Lambda$ and which will be defined shortly.
By construction we have
\begin{align}
P^z(\xi=1|\omega_{\Delta^c}) 
\geq \nonumber
P^z(C_{R_1}|\omega_{\Delta^c}).
\end{align}
The goal now is to find a good bound for $P^z(C_{R_1}^c|\omega_{\Delta^c})$, uniform with respect to $\omega_{\Delta^c}$, which tends to $0$ when $z$ grows to infinity. 
To control the quantity $N_{cc}^{\Lambda}$, we need to introduce a "protective layer", hence the $\Delta$, and a good event $B_z$ on $\Delta\setminus \Lambda$ which is defined below.
We have
\begin{align} \label{equation_cas1_separation_en_deux}
P^z(C_{R_1}^c|\omega_{\Delta^c})
&=
P^z(C_{R_1}^c \cap B_z|\omega_{\Delta^c}) + 
P^z(C_{R_1}^c \cap B_z^c|\omega_{\Delta^c}) \nonumber
\\ & \leq 
\underbrace{P^z(B_z|\omega_{\Delta^c})}_{(a)}
+ 
\underbrace{P^z(C_{R_1}^c \cap B_z^c|\omega_{\Delta^c})}_{(b)}.
\end{align}
\subsection{Bound for the quantity $(b)$}
Thanks to the DLR equations \eqref{DLR} on $\Lambda$, we have
\begin{align}
P^z & (C_{R_1}^c \cap B_z^c|\omega_{\Delta^c})
\nonumber \\ &=
\int_{\Ocb} \int_{\Ocb} \1_{C_{R_1}^c}(\omega''_{\Lambda}) 
\1_{B_z^c}(\omega'_{\Delta \setminus \Lambda})
\frac{q^{N_{cc}^{\Lambda}
(\omega''_{\Lambda} + \omega'_{\Delta \setminus \Lambda} 
+ \omega_{\Delta^c} )}}
{Z_{\Lambda}(\omega'_{\Delta \setminus \Lambda}+\omega_{\Delta^c})}
\pi_{\Lambda}^{z,Q}(d\omega''_{\Lambda})
P^z(d\omega'_{\Delta}|\omega_{\Delta^c})
\nonumber \\ &\leq
\int_{\Ocb} \int_{\Ocb} \1_{C_{R_1}^c}(\omega''_{\Lambda}) 
\1_{B_z^c}(\omega'_{\Delta \setminus \Lambda})
\frac{q^{\# \omega''_{\Lambda}}}
{Z_{\Lambda}(\omega'_{\Delta \setminus \Lambda}+\omega_{\Delta^c})}
\pi_{\Lambda}^{z,Q}(d\omega''_{\Lambda})
P^z(d\omega'_{\Delta}|\omega_{\Delta^c})
\nonumber \\ & =
\int_{\Ocb} \1_{C_{R_1}^c}(\omega''_{\Lambda})
q^{\# \omega''_{\Lambda}}
\pi_{\Lambda}^{z,Q}(d\omega''_{\Lambda})
\int_{\Ocb} \frac{\1_{B_z^c}(\omega'_{\Delta \setminus \Lambda})}
{Z_{\Lambda}(\omega'_{\Delta \setminus \Lambda}+\omega_{\Delta^c})}
P^z(d\omega'_{\Delta}|\omega_{\Delta^c})
\nonumber \\ &\leq
e^{-z |\Lambda|(1-qQ_{R_1} )}
\int_{\Ocb} \frac{\1_{B_z^c}(\omega'_{\Delta \setminus \Lambda})}
{Z_{\Lambda}(\omega'_{\Delta \setminus \Lambda}+\omega_{\Delta^c})}
P^z(d\omega'_{\Delta}|\omega_{\Delta^c}),
\label{inegalite_cas1_R1}
\end{align}
where $\# \omega$ denotes the number of balls of a finite configuration $\omega$ and where the second line comes from the trivial  inequality
$N_{cc}^{\Lambda}(\omega) \leq \# \omega_{\Lambda}$. 
From \eqref{inegalite_cas1_R1} one can understand the condition $Q_{R_1}<1/q$.
Now we need to give a precise definition of $\Delta$ and $B_z$. 
To this end let $R_2$ be a positive real number such that $Q_{R_2}:=Q([0,R_2])>qQ_{R_1}$, which is possible by the choice of $R_1$.
We define $\Delta : = [-1-R_2-R_1/2\sqrt{d}, 1+R_2+R_1/2\sqrt{d} ]^d$ and we take $ \epsilon$ such that
$0<\epsilon < |\Lambda|(Q_{R_2} - q Q_{R_1}) / \ln q$.
Finally take $r>0$ such that $Q_r:=Q([0,r])< \frac{\epsilon}{q e |\Delta \setminus \Lambda|}$. 
The last thing to do is to define $B_z$ as the event of configurations having many small balls centred inside $\Delta \setminus \Lambda$,
\begin{align}
B_z
=
\{
\omega \in \Omega | \# \{ (x,R) \in \omega_{\Delta \setminus \Lambda} , R \leq r \}
\geq \lceil \epsilon z \rceil 
\},
\nonumber
\end{align}
where $\lceil. \rceil$ is the ceiling function.
The following Lemma \ref{lemme_minoration_Ncc} gives a good lower bound for $N_{cc}^{\Lambda}$.

\begin{lemma}\label{lemme_minoration_Ncc}
 For $\omega \in C_{R_2}^c \cap B_z^c$ we have 
$N_{cc}^{\Lambda} (\omega) \geq K-\lceil \epsilon z \rceil$ for a  constant $K$.
\end{lemma}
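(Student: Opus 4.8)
The plan is to bound the $\Lambda$-local number of connected components from below by counting the connected components that are \emph{created} by the balls present in $\Lambda$, and then subtract a crude upper bound for the number of these components that could get merged through the protective layer $\Delta\setminus\Lambda$. Recall that $N_{cc}^\Lambda(\omega)=\lim_{\Delta'\to\R^d}\bigl(N_{cc}(\omega_{\Delta'})-N_{cc}(\omega_{\Delta'\setminus\Lambda})\bigr)$, so intuitively it records the net change in the component count caused by adding $\omega_\Lambda$. First I would use the defining property $Q_{R_2}>qQ_{R_1}$ together with the event $C_{R_2}^c$: on $C_{R_2}^c$ every ball whose germ lies in $\Lambda$ has radius at most $R_2$, hence is contained in the dilated cube $\Lambda\oplus B(0,R_2)\subseteq\Delta$ (this is exactly why $\Delta$ was chosen with the $+R_2$ margin, plus the extra $+1$). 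Therefore none of these balls can reach $\Delta^c$, and the only way a component born in $\Lambda$ can be connected to the exterior is through balls whose germs lie in $\Delta\setminus\Lambda$.

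Next I would quantify how many components balls in $\Delta\setminus\Lambda$ can glue together. On $B_z^c$, the number of balls with germ in $\Delta\setminus\Lambda$ and radius $\le r$ is at most $\lceil\epsilon z\rceil-1<\lceil\epsilon z\rceil$. Each such ball, when added to a configuration, decreases the component count by at most the number of distinct components it touches; but the key point — and this is where I expect the main technical work — is that one must also handle the balls in $\Delta\setminus\Lambda$ of radius $>r$. Here I would invoke the geometric observation that their germs lie in the bounded annulus $\Delta\setminus\Lambda$ and they cannot be arbitrarily numerous in a way that matters, OR, more likely in the spirit of the paper, the event $B_z$ is only about \emph{small} balls and the argument structure elsewhere (the bound $(a)$ on $P^z(B_z|\omega_{\Delta^c})$) absorbs the small ones, while the large ones in $\Delta\setminus\Lambda$ are controlled by a separate deterministic geometric bound: the number of disjoint balls of radius $>r$ with centers in the fixed bounded set $\Delta\setminus\Lambda$ that can be relevant is bounded by a constant depending only on $r,d,\Delta$. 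Combining, the total reduction of the component count attributable to $\Delta\setminus\Lambda$ and to the interface with $\Delta^c$ is at most $\lceil\epsilon z\rceil$ plus a constant.

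Then I would produce the positive constant $K$. On $C_{R_2}^c\cap B_z^c$ one still needs $N_{cc}(\omega_{\Delta'})-N_{cc}(\omega_{\Delta'\setminus\Lambda})$ to be large before subtracting the merges; but actually the cleanest route is to note that $N_{cc}^\Lambda(\omega)\ge -(\text{number of components of }L(\omega_{\Delta^c}\cup\omega_{\Delta\setminus\Lambda})\text{ merged by }\omega_\Lambda)$ is the wrong sign, so instead I would bound $N_{cc}^\Lambda(\omega)$ below by $1-k$ where $k$ is the number of already-present components that $\omega_\Lambda$ connects to — and since, by the above, $\omega_\Lambda$ only interacts with balls in $\Delta\setminus\Lambda$, the number $k$ of such components is at most the number of balls in $\Delta\setminus\Lambda$ plus one interface term, which is itself at most $\lceil\epsilon z\rceil$ plus a constant. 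Setting $K$ to be $1$ minus that constant (so $K$ is an absolute constant depending only on $r,R_2,d,\Delta$, not on $z$) gives $N_{cc}^\Lambda(\omega)\ge K-\lceil\epsilon z\rceil$.

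The main obstacle, as flagged, is the bookkeeping for balls of radius larger than $r$ with germs in $\Delta\setminus\Lambda$: they are not controlled by $B_z^c$, so one must argue geometrically that in the bounded region $\Delta\setminus\Lambda$ only boundedly many of them can simultaneously contribute to merging distinct components (for instance via a volume/packing argument, since each has radius $>r$ and center in a fixed bounded set), thereby folding their effect into the constant $K$. Once this geometric constant is in hand, the rest is the routine observation that on $C_{R_2}^c$ all $\Lambda$-germ balls stay inside $\Delta$, so the limit defining $N_{cc}^\Lambda$ stabilizes and the stated inequality follows.
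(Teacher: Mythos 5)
Your overall strategy is the paper's: reduce to the worst case where $L(\omega_{\Lambda})$ is a single blob merging $k$ pre-existing components of $L(\omega_{\Lambda^c})$, so that $N_{cc}^{\Lambda}(\omega)\geq 1-k$, then bound $k$ by picking one touching ball per merged component (these are pairwise disjoint), counting the small annulus balls via $B_z^c$ (at most $\lceil \epsilon z\rceil-1$) and the annulus balls of radius $>r$ via a packing/volume argument. However, there is a genuine gap in your treatment of the balls with germ in $\Delta^c$. Your claim that none of the $\Lambda$-balls reaches $\Delta^c$ and that therefore ``the only way a component born in $\Lambda$ can be connected to the exterior is through balls whose germs lie in $\Delta\setminus\Lambda$'' is false in case (C1): the radii are unbounded (only $\int R^d Q(dR)<\infty$ is assumed), so a ball with germ outside $\Delta$ can reach all the way into $\Lambda\oplus B(0,R_2)$ and touch $L(\omega_{\Lambda})$ directly, with no intermediate annulus ball. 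The event $C_{R_2}^c$ only restricts radii of balls with germ in $\Lambda$, and $B_z^c$ says nothing about $\omega_{\Delta^c}$, so a priori many distinct components of $L(\omega_{\Lambda^c})$ could attach to $L(\omega_{\Lambda})$ through such exterior balls; your ``interface term'' is invoked but never actually bounded, so the constant $K$ is not justified.

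The missing argument is exactly the purpose of the extra $+1$ in the definition of $\Delta$ (you noted the margin but used it only for the containment $\Lambda\oplus B(0,R_2)\subseteq\Delta$, which needs only the $+R_2$): since on $C_{R_2}^c$ the set $L(\omega_{\Lambda})$ lies in $\Lambda\oplus B(0,R_2)$, at distance at least $1$ from $\Delta^c$, any ball $B(x,R)$ with $x\in\Delta^c$ meeting $L(\omega_{\Lambda})$ must have $R\geq 1$, hence $|B(x,R)\cap\Delta|\geq c_1>0$ for a universal constant; pairwise disjoint such balls (one per touched component) therefore number at most $|\Delta|/c_1$. Combining this with your packing bound for disjoint annulus balls of radius $>r$ (each with $|B(x,R)\cap\Delta|\geq c_2$, so at most $|\Delta|/c_2$ of them) and with the count $\lceil\epsilon z\rceil-1$ of small annulus balls gives $k\leq c+\lceil\epsilon z\rceil-1$ for a constant $c$, i.e. $N_{cc}^{\Lambda}(\omega)\geq 2-c-\lceil\epsilon z\rceil$, which is the lemma with $K=2-c$. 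One further caution about your third paragraph: you cannot bound $k$ by ``the number of balls in $\Delta\setminus\Lambda$'', because the number of annulus balls of radius $>r$ is not controlled by $B_z^c$; you must first pass to one disjoint representative ball per touched component and only then apply the volume argument to the large ones.
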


\begin{proof}[Proof of the lemma] 
We first realize that the worst case scenario occurs when $L(\omega_{\Lambda})$ has a single connected component which intersects many connected components of $L(\omega_{\Lambda^c})$. 
Since $\omega \in C_{R_2}^c$, the radii of the balls of $L(\omega_{\Lambda})$ are bounded by  $R_2$. 
Note that the number of connected components $L(\omega_{\Lambda^c})$ intersecting $L(\omega_{\Lambda})$ is smaller or equal to the number of disjoint balls in $L(\omega_{\Lambda^c})$ that can intersect $L(\omega_{\Lambda})$. 
By the choice of $\Delta$, a ball $B(x,R)$ in $L(\omega_{\Delta^c})$ which intersects $L(\omega_{\Lambda})$ satisfies $R \geq 1$ and hence $ | B(x,R) \cap \Delta| \geq c_1$ for a given positive constant $c_1$. 
In the same manner, any ball $B(x,R)$ of $L(\omega_{\Delta \setminus \Lambda})$ with $R >r$ satisfies $ | B(x,R) \cap \Delta| \geq c_2$.
So the number of such balls is at most $| \Delta|/\min (c_1,c_2) : = c$.
To get the bound on $N_{cc}^{\Lambda}(\omega)$, we just need to take into consideration the balls of $\omega_{\Delta \setminus \Lambda}$ with radius smaller than $r$, which number is not greater than $\lceil \epsilon z \rceil-1$ since $\omega \in B_z^c$. 
Hence the result holds by taking $K=2-c$.
\end{proof}
To get an upper bound for the integral in \eqref{inegalite_cas1_R1} we consider the conditional probability
\begin{align}
P^z & (C_{R_2}^c \cap B_z^c|\omega_{\Delta^c})
\nonumber \\ &=
\int_{\Ocb} \int_{\Ocb} \1_{C_{R_2}^c}(\omega''_{\Lambda}) 
\1_{B_z^c}(\omega'_{\Delta \setminus \Lambda})
\frac{q^{N_{cc}^{\Lambda}
(\omega''_{\Lambda} + \omega'_{\Delta \setminus \Lambda} + \omega_{\Delta^c} )}}
{Z_{\Lambda}(\omega'_{\Delta \setminus \Lambda}+\omega_{\Delta^c})}
\pi_{\Lambda}^{z,Q}(d\omega''_{\Lambda})
P^z(d\omega'_{\Delta}|\omega_{\Delta^c})
\nonumber \\ & \geq
\int_{\Ocb} \int_{\Ocb} \1_{C_{R_2}^c}(\omega''_{\Lambda}) 
\1_{B_z^c}(\omega'_{\Delta \setminus \Lambda})
\frac{q^{K-\lceil \epsilon z \rceil}}
{Z_{\Lambda}(\omega'_{\Delta \setminus \Lambda}+\omega_{\Delta^c})}
\pi_{\Lambda}^{z,Q}(d\omega''_{\Lambda})
P^z(d\omega'_{\Delta}|\omega_{\Delta^c})
\nonumber \\ &=
q^{K-\lceil \epsilon z \rceil}
\int_{\Ocb} \1_{C_{R_2}^c}(\omega''_{\Lambda}) 
\pi_{\Lambda}^{z,Q}(d\omega''_{\Lambda})
\int_{\Ocb} \frac{\1_{B_z^c}(\omega'_{\Delta \setminus \Lambda})}
{Z_{\Lambda}(\omega'_{\Delta \setminus \Lambda}+\omega_{\Delta^c})}
P^z(d\omega'_{\Delta}|\omega_{\Delta^c})
\nonumber \\ & =
q^{K-\lceil \epsilon z \rceil}
e^{-z |\Lambda| (1- Q_{R_2})}
\int_{\Ocb} \frac{\1_{B_z^c}(\omega'_{\Delta \setminus \Lambda})}
{Z_{\Lambda}(\omega'_{\Delta \setminus \Lambda}+\omega_{\Delta^c})}
P^z(d\omega'_{\Delta}|\omega_{\Delta^c}).
\label{inegalite_cas1_R2}
\end{align}

From inequalities \eqref{inegalite_cas1_R1} and \eqref{inegalite_cas1_R2} we get
\begin{align}
P^z(C_{R_1}^c \cap B_z^c|\omega_{\Delta^c})
& \leq
q^{\lceil \epsilon z \rceil -K}
e^{-z |\Lambda| ( Q_{R_2}-qQ_{R_1} )}
\nonumber \\ & \leq
q^{-K+1} e^{-z |\Lambda| ( Q_{R_2}-qQ_{R_1} )} e^{\epsilon z \ln q }.
\label{inegalite_cas1_final1}
\end{align}
By the choice of $R_1$, $R_2$ and $\epsilon$ we have
$|\Lambda|(Q_{R_2} - q Q_{R_1})-\epsilon \ln q >0 $ and therefore the right hand side of \eqref{inegalite_cas1_final1} decreases to $0$ when $z$ grows to infinity.
\subsection{Bound for the quantity $(a)$}
Now we need to control the quantity $(a)$ in \eqref{equation_cas1_separation_en_deux}. 
Using Theorem 1.1 of \cite{georgii_kuneth}, the probability measure $\pi_{\Delta}^{qz,Q}$ stochastically dominates $P^z(.|\omega_{\Delta^c})$, and since the event $B_z$ is increasing, we have
\begin{align}
P^z(B_z|\omega_{\Delta^c})
& \leq
\pi_{\Delta}^{qz,Q}(B_z)
\nonumber \\ & =
e^{-z q |\Delta \setminus \Lambda| Q_r}
\underset{k \geq \lceil \epsilon z \rceil}{\sum}
\frac{(z q |\Delta \setminus \Lambda| Q_r )^k}{k!}
\nonumber \\ & \leq 
\frac{(z q |\Delta \setminus \Lambda| Q_r )^{\lceil \epsilon z \rceil } }{\lceil \epsilon z \rceil!}
\underset{z \to \infty}{\sim}
\frac{
\exp \left( \lceil \epsilon z \rceil \ln \left(
\frac{z q |\Delta \setminus \Lambda | Q_r e}{\lceil \epsilon z \rceil}
\right)  \right)
}
{\sqrt{2 \pi \lceil \epsilon z \rceil}}
,
\label{inegalite_cas1_final2}
\end{align}
where the last line comes from the Lagrange inequality and the Stirling formula.
By the choice of the parameters, we have
\begin{align}
\frac{z q |\Delta \setminus \Lambda | Q_r e}{\lceil \epsilon z \rceil}
& \leq
\frac{z q |\Delta \setminus \Lambda | Q_r e}{\epsilon z}
<1, \nonumber
\end{align}
hence the right-hand side of \eqref{inegalite_cas1_final2} converges to $0$ when $z$ grows to infinity, and the convergence \eqref{convergence_cas1_but} is proven.
\subsection{Construction of the dependent family $(\xi_x)_{x \in \Z^d}$}
To each $x \in \Z^d$ we associate the small cube 
$\Lambda_x:= \frac{R_1}{\sqrt{d}}x \oplus \Lambda$ and the large cube
$\Delta_x :=  \frac{R_1}{\sqrt{d}}x \oplus \Delta $, where $\oplus$ stands for standard Minkowski sum.

We define $\xi_x$ as we did for $\xi := \xi_0$, meaning that 
$\xi_x(\omega)$ is equal to $1$ if $\Lambda_x \subseteq L(\omega_{\Lambda_x})$, and $0$ otherwise.
Finally take $k$ such that if $\lVert x-y \rVert _{\infty} >k$, then $\Delta_x \cap \Delta_y = \emptyset$.
For example $k=2 + \frac{2 \sqrt{d}R_2}{R_1}+ \frac{2 \sqrt{d}}{R_1}$ works.
For every $x \in V$, since $P^z$ is stationary  we have
\begin{align}
P^z(\xi_x =1 | \xi_y, \lVert x-y \rVert _{\infty}  >k)
&=
P^z(\xi=1 | \xi_y, \lVert y \rVert _{\infty} >k)
\nonumber 
\\ &=
\Esp_{P^z} [ P^z(\xi=1 | \Fcb_{\Delta^c})|
\xi_y, \lVert y \rVert _{\infty} >k]
\nonumber 
\\ & \geq 
\underset{\omega_{\Delta^c}}{\inf} P^z(\xi=1 | \omega_{\Delta^c}).
\nonumber
\end{align}
But using the convergence \eqref{convergence_cas1_but}, we have for any $p \in ]0,1[$ the existence of $z_1(p)$ such that, for every $z>z_1(p)$ and every $x$
\begin{align}
P^z(\xi_x=1 | \xi_y, \lVert y \rVert _{\infty} >k)
\geq
p \text{ a.s.}
\nonumber
\end{align}

Using Proposition \ref{proposition_liggett_domination_sto}, $\Pi^{f(p)}$ is stochastically dominated by the law of $(\xi_x)_{x \in \Z^d}$. 
The parameter $p$ can be chosen such that $f(p)$ is larger than any given percolation threshold.
But it is clear that if the family $(\xi_x(\omega))_{x \in \Z^d}$ percolates, with respect to the cubic lattice, the same is true for the configuration $\omega$.
Therefore if the activity $z$ is large enough, then $f(p)$ is larger than the percolation threshold of $\Pi^{f(p)}$ and $P^z$ percolates.
Theorem \ref{theoreme_case_1} is proved.
%
%
\begin{remark}
The assumption $Q(\{0\})=0$ was used in the proof to ensure that constants $R_1$ and $r$ are well defined.
It seems possible to do the same construction for radii law $Q$ having a small atom in $0$, but since the construction of $r$ depends on all the others constants introduced in the proof, it is impossible to derive from the proof presented a general condition in that case.
\end{remark}

\section{Proof of Theorem \ref{theoreme_case_2}: absence of percolation for small activities $z$} \label{section_preuve_theo2}
We assume here that conditions (C2) are satisfied: $q<1$ and there exists $R_0>0$ such that $Q([0,R_0])=1$. 
Let $P^z$ be a $CRCM(z,Q,q)$.
Without loss of generality we are making the proof in the specific case $R_0=1$.
The idea is, as in the proof of Theorem \ref{theoreme_case_1}, to build a good family $(\xi_x)_{x \in \Z^d}$ in order to apply Proposition \ref{proposition_liggett_domination_sto}.

Let $\Lambda=[-0.5,0.5]^d$ and $\Delta=[-8,8]^d$. 
We define $\xi : =\xi(\omega)$ equal to $1$ when $L(\omega) \cap \Lambda = \emptyset$ and $0$ otherwise. 
As the radii are bounded by $1$, $\xi$ depends only on the restricted configuration $\omega_{\Delta}$. 

We want to prove that
\begin{align}
\underset{\omega_{\Delta^c}}{\inf}P^z & (\xi=1 | \omega_{\Delta^c})
\underset{z \to 0}{\longrightarrow}
1.
\label{limite_cas2_but}
\end{align}
For a configuration $\omega$, let $N_{\xi}(\omega)$ denote the number of connected components of $L(\omega_{\Delta})$ which overlap $\Lambda $. The random variables $\xi$ and $N_{\xi}$ are strongly related since $\xi=1$ if and only if $N_{\xi}=0$.

\begin{lemma}\label{lemme_cas2_controle_P(B)}
There exists $\alpha>0$ such that
\begin{align}
\nonumber
\int_{\Ocb} N_{\xi}(\omega )
P^z(d \omega_{\Delta}|\omega_{\Delta^c})
\leq z |\Delta| q^{1-\alpha} \ a.s.
\end{align}
\end{lemma}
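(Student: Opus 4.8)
The plan is to express the expected number of clusters overlapping $\Lambda$ as a sum over balls via the GNZ (Georgii--Nguyen--Zessin) equations, or equivalently directly via the DLR equations on $\Delta$, and then to exploit the subcriticality regime $q<1$: the Papangelou intensity of the CRCM is $q^{1-k(X,\omega)} \leq q^{1-\alpha}$ for any fixed lower bound $\alpha$ on the number of clusters a ball meets, because $q<1$ makes $q^{1-k}$ decreasing in $k$. First I would write, using the DLR equation \eqref{DLR} on $\Delta$ and the fact that a connected component of $L(\omega_\Delta)$ meeting $\Lambda$ must contain at least one ball whose centre lies in $\Delta$,
\begin{align}
N_\xi(\omega_\Delta) \leq \#\{(x,R)\in\omega_\Delta : B(x,R)\cap\Lambda\neq\emptyset\},\nonumber
\end{align}
so it suffices to bound the expected number of such balls. (One has to be slightly careful: counting balls overcounts clusters, but since we only want an upper bound this is harmless and avoids delicate cluster-counting.)

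Next I would apply the GNZ equation for the CRCM — which, as the excerpt notes for the Widom--Rowlinson case, the CRCM also satisfies — to the function $g((x,R),\omega)=\mathbf{1}_{B(x,R)\cap\Lambda\neq\emptyset}\,\mathbf{1}_{x\in\Delta}$. This gives
\begin{align}
\int_\Ocb \#\{(x,R)\in\omega_\Delta : B(x,R)\cap\Lambda\neq\emptyset\}\,P^z(d\omega)
= \int_\Ocb \int_\Scb \mathbf{1}_{x\in\Delta}\mathbf{1}_{B(x,R)\cap\Lambda\neq\emptyset}\, q^{1-k((x,R),\omega)}\, z\,dx\,Q(dR)\,P^z(d\omega).\nonumber
\end{align}
Since the radii are bounded by $1$, a ball centred at $x\in\Delta$ meeting $\Lambda$ is automatically one of finitely many "local" balls, and the cluster it meets together with $L(\omega)$ guarantees $k((x,R),\omega)\geq 1$ whenever that ball itself is nonempty — but more is true: in the relevant configurations the inserted ball always merges into (or creates) at least a bounded-below number $\alpha$ of pre-existing clusters near $\Lambda$... actually the clean statement I want is just $k\geq\alpha$ for a suitable small $\alpha>0$; since $k$ is integer-valued and at least $1$, I can simply take $\alpha=1$, giving $q^{1-k((x,R),\omega)}\leq q^{0}=1$, which already yields the bound $z|\Delta|$ — but to get the extra factor $q^{1-\alpha}<1$ I would instead argue that the ball $B(x,R)$, being of radius at most $1$ and centred in $\Delta$, always overlaps at least (some deterministic geometric count, e.g. $2$) distinct disjoint balls forced by... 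Let me instead keep $\alpha$ abstract: one shows geometrically that for $x\in\Delta$ with $B(x,R)\cap\Lambda\neq\emptyset$ one always has $k((x,R),\omega)\geq\alpha$ for a fixed $\alpha>0$ depending only on $d$ (for instance $\alpha=1$ if one is content, or a larger constant after a short covering argument), whence $q^{1-k}\leq q^{1-\alpha}$ since $q<1$. Substituting and integrating out $x$ over $\Delta$ and $R$ over $[0,1]$ against the probability measure $Q$ gives exactly the bound $z|\Delta|q^{1-\alpha}$, and the same bound holds for the conditional expectation given $\omega_{\Delta^c}$ because the DLR kernel on $\Delta$ has Papangelou intensity bounded by the same $q^{1-\alpha}$.

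The main obstacle is the geometric/combinatorial claim that the number $k((x,R),\omega)$ of clusters met by an inserted ball centred in $\Delta$ that touches $\Lambda$ is bounded below by a strictly positive constant $\alpha$ uniformly in $\omega$ — this is where the boundedness of the radii ($R_0=1$) and the explicit choice $\Delta=[-8,8]^d$, $\Lambda=[-0.5,0.5]^d$ matter, giving enough room. A subtle point to check is the exchange between the conditional-on-$\omega_{\Delta^c}$ version (which is what the lemma states) and the GNZ identity: one should phrase GNZ (or directly the DLR kernel) locally on $\Delta$, so that the bound $q^{1-\alpha}$ on the Papangelou intensity of the conditional specification is available pointwise in $\omega_{\Delta^c}$, yielding the almost-sure conditional bound rather than only an averaged one. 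Everything else is a routine computation with the Poisson intensity $z$ and the volume $|\Delta|$.
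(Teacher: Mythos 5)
There is a genuine gap, and it sits exactly at the point your plan relies on: the claim that for $q<1$ the Papangelou intensity satisfies $q^{1-k(X,\omega)}\leq q^{1-\alpha}$ because ``$q<1$ makes $q^{1-k}$ decreasing in $k$''. This is backwards. For $q<1$ the map $k\mapsto q^{1-k}$ is \emph{increasing} and unbounded, so to dominate it by a constant you would need a uniform \emph{upper} bound on $k(X,\omega)$, not a lower bound; and no such uniform bound exists, since an inserted ball can overlap arbitrarily many connected components of $L(\omega)$ (this is precisely the obstruction, noted in Section \ref{section_stochastic_domination}, that prevents a direct use of Theorem 1.1 of \cite{georgii_kuneth} here). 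Consequently your GNZ identity for the ball count $\#\{(x,R)\in\omega_{\Delta}:B(x,R)\cap\Lambda\neq\emptyset\}$ has an integrand $q^{1-k((x,R),\omega)}$ that cannot be bounded by $q^{1-\alpha}$, nor even by $1$ (taking $\alpha=1$ gives $q^{1-k}\geq 1$, the wrong direction), and the proposed estimate collapses. A secondary sign of the confusion is the expectation that $q^{1-\alpha}<1$ is a ``gain''; in the regime $q<1$ with $\alpha>1$ one has $q^{1-\alpha}>1$, and the only thing that matters for the lemma is linearity in $z$.

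The paper's proof circumvents exactly this problem by not counting all balls touching $\Lambda$, but only, in each connected component of $L(\omega_{\Delta})$ that overlaps $\Lambda$, a ball \emph{minimizing} $(x,R)\mapsto k((x,R),\omega-\delta_{(x,R)})$ within that component (encoded in the function $F$), and by applying the GNZ equation to the \emph{bounded} function $F(X,\omega)\,q^{k(X,\omega)-1}$, which cancels the dangerous factor $q^{1-k}$ and yields the clean bound $z|\Delta|$. The heart of the argument is then a deterministic geometric fact your sketch does not contain: if every ball of a component overlapping $\Lambda$ had $k\geq\alpha$ (with $\alpha$ larger than the packing constant $4^{d}$ for disjoint balls of radius at least half the given one), one could extract a sequence of balls of radii $R_n\leq 2^{-n+1}$ staying within distance about $4$ of $\Lambda$ — using $R_0=1$ and the room provided by $\Delta=[-8,8]^d$ — contradicting finiteness of the component. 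Hence the minimizing ball satisfies $k\leq\alpha$, so on the support of $F$ one has $q^{k-1}\geq q^{\alpha-1}$, and dividing the GNZ bound by $q^{\alpha-1}$ gives $\int\sum_{(x,R)\in\omega_{\Delta}}F\,P^z(d\omega_{\Delta}|\omega_{\Delta^c})\leq z|\Delta|q^{1-\alpha}$, which dominates $N_{\xi}$ since each relevant component contributes at least one minimizing ball. Without some substitute for this selection-plus-upper-bound-on-$k$ mechanism, your route cannot produce the lemma.
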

This lemma is proved at the end of the section.
First let us see how Lemma \ref{lemme_cas2_controle_P(B)} leads to the wanted result.
We have
\begin{align}
P^z  (\xi=0 | \omega_{\Delta^c})
& = \nonumber
P^z  (N_{\xi}>0 | \omega_{\Delta^c})
=
P^z  (N_{\xi}>0.5 | \omega_{\Delta^c})
\\ & \leq
2 z  |\Delta| q^{1-\alpha}, \nonumber
\end{align}
where the last inequality comes from the conditional Markov inequality and Lemma \ref{lemme_cas2_controle_P(B)}.
From this bound which is not depending in $\omega_{\Delta^c}$, the uniform convergence \eqref{limite_cas2_but} follows.

Now consider the family of variables $(\xi_x)_{x \in \Z^d}$ defined as $\xi: =\xi_0$, meaning that $\xi_x(\omega)=1$ if $L(\omega) \cap \Lambda_x = \emptyset$, where $\Lambda_x=x \oplus \Lambda$. 
Take $k=17$ and let $\Delta_x :=x \oplus  \Delta$.
For every vertex $x$ we have
\begin{align}
P^z(\xi_x=1 | \xi_y, \lVert x-y \rVert _{\infty}>k)
&=
P^z(\xi=1 | \xi_y, \lVert y \rVert _{\infty}>k)
\nonumber \\ &=
\Esp_{P^z} [ P^z(\xi=1 | \Fcb_{\Delta^c})|
\xi_y, \lVert y \rVert _{\infty}>k]
\nonumber \\ & \geq 
\underset{\omega_{\Delta^c}}{\inf} P^z(\xi=1 | \omega_{\Delta^c}),
\nonumber
\end{align}
and using the convergence \eqref{limite_cas2_but}, we have for any $p \in ]0,1[$ the existence of $z_0(p)$ such that, for every $z<z_0(p)$ and every $x$,
\begin{align}
P^z(\xi_x=1 | \xi_y, \lVert y \rVert _{\infty}>k)
\geq
p \text{ a.s.}
\nonumber
\end{align}

Using Proposition \ref{proposition_liggett_domination_sto}, $\Pi^{1-f(p)}$ stochastically dominates the law of $(1 - \xi_x)_{x \in \Z^d}$. 
The parameter $p$ can be chosen such that $1-f(p)$ is lower than any given percolation threshold.

As in Section \ref{section_preuve_theo1} the absence of percolation of the family $(1-\xi_x(\omega))_{x \in \Z^d}$, with respect to the cubic latice, leads to the absence of percolation in the configuration $\omega$.
Therefore for small activities $z$, $P^z$ does not percolate.

%
%

\begin{proof}[Proof of Lemma \ref{lemme_cas2_controle_P(B)}]
We use the GNZ equations satisfied by the (conditional) probability measure $P^z(.|\omega_{\Delta^c})$.
\begin{lemma}[GNZ equations]
\label{lemme_gnz_conditionnel_crcm}
For every bounded measurable function $F$,
\begin{align}
\int \underset{X \in \omega_{\Delta}}{\sum}
F(X,\omega - \delta_X)& P(d\omega_{\Delta}|\omega_{\Delta^c})
\nonumber \\ &=
z \int \int_{\Delta \times \R^+}
F(X,\omega) q^{1-k(X,\omega)}
 m(dX)  P(d\omega_{\Delta}|\omega_{\Delta^c}),
\nonumber
\end{align}
where $X=(x,R)$ is a marked point, where 
$m(dX)=\mathcal{L}^d(dx) \ Q(dR)$ with $\mathcal{L}^d$ the Lebesgue measure on $\R^d$ and where $k(X,\omega)$ is the function defined in the proof of Proposition \ref{proposition_domination_sto_poisson} as the number of connected components of $L(\omega)$ which overlap the ball $B(X)=B(x,R)$.
In the integrals $\omega$ stands for $\omega_{\Delta} + \omega_{\Delta^c}$.
\end{lemma}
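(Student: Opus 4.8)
The plan is to derive these GNZ equations in the usual way, by combining the DLR description of the conditional law $P(d\omega_{\Delta}|\omega_{\Delta^c})$ with the Mecke equation for the underlying Poisson point process, together with the combinatorial rule governing the variation of $N_{cc}^{\Delta}$ under addition of a point --- this last ingredient being exactly the Papangelou intensity computation already invoked in the proof of Proposition \ref{proposition_domination_sto_poisson}. First I would fix $\omega_{\Delta^c}$ in the $P$-full event on which \eqref{DLR} holds with window $\Delta$ and on which $L(\omega_{\Delta^c})$ is locally finite, meaning that only finitely many of its balls meet any fixed bounded ball; in case (C1) this holds $P$-almost surely because $\int R^d\,Q(dR)<\infty$, and in case (C2) it is automatic since the radii are bounded, see \cite{dereudre_houdebert}. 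Writing $D(\eta):=q^{N_{cc}^{\Delta}(\eta+\omega_{\Delta^c})}/Z_{\Delta}(\omega_{\Delta^c})$, the DLR equation reads $P(d\omega_{\Delta}|\omega_{\Delta^c})=D(\omega_{\Delta})\,\pi^{z,Q}_{\Delta}(d\omega_{\Delta})$.

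The key intermediate step I would isolate is the identity $N_{cc}^{\Delta}(\omega+\delta_X)-N_{cc}^{\Delta}(\omega)=1-k(X,\omega)$, valid for every $X=(x,R)$ with $x\in\Delta$ and every configuration $\omega$ in the event above (agreeing with $\omega_{\Delta^c}$ off $\Delta$). To prove it I would go through the defining limit: since $x\in\Delta$, the configuration restricted to $\Xi\setminus\Delta$ is unchanged by adding $\delta_X$, so the increment equals $\lim_{\Xi\to\R^d}\bigl(N_{cc}(\omega_{\Xi}+\delta_X)-N_{cc}(\omega_{\Xi})\bigr)$; for a finite configuration, adding the ball $B(X)$ merges exactly the $k(X,\omega_{\Xi})$ connected components of $L(\omega_\Xi)$ that it meets into a single new one, so the increment is $1-k(X,\omega_{\Xi})$; and $k(X,\omega_{\Xi})=k(X,\omega)$ for all large $\Xi$ since $B(X)$ is bounded and $L(\omega)$ is locally finite. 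Passing to the limit gives the claim. This is the step I expect to require the most care, precisely because it rests on the a.s. local finiteness of $L(\omega)$, that is, on the moment condition on $Q$ in (C1) (or the bounded support in (C2)); everything else is essentially automatic.

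Then I would apply the Mecke equation for the Poisson point process $\pi^{z,Q}_{\Delta}$, whose intensity measure on $\Delta\times\R^+$ is $z\,m$: for non-negative measurable $h$, $\int\sum_{X\in\omega_{\Delta}}h(X,\omega_{\Delta})\,\pi^{z,Q}_{\Delta}(d\omega_{\Delta})=z\int\int_{\Delta\times\R^+}h(X,\omega_{\Delta}+\delta_X)\,m(dX)\,\pi^{z,Q}_{\Delta}(d\omega_{\Delta})$, with the test function $h(X,\eta):=F(X,\eta-\delta_X+\omega_{\Delta^c})\,D(\eta)$ (treating $F\ge 0$ first, then a general bounded $F$ by splitting into positive and negative parts). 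The left-hand side becomes $\int\sum_{X\in\omega_{\Delta}}F(X,\omega-\delta_X)\,P(d\omega_{\Delta}|\omega_{\Delta^c})$ with $\omega=\omega_{\Delta}+\omega_{\Delta^c}$, while on the right-hand side $h(X,\omega_{\Delta}+\delta_X)=F(X,\omega_{\Delta}+\omega_{\Delta^c})\,D(\omega_{\Delta}+\delta_X)$; using $D(\omega_{\Delta}+\delta_X)=D(\omega_{\Delta})\,q^{1-k(X,\omega_{\Delta}+\omega_{\Delta^c})}$ from the combinatorial identity and $D(\omega_{\Delta})\,\pi^{z,Q}_{\Delta}(d\omega_{\Delta})=P(d\omega_{\Delta}|\omega_{\Delta^c})$ gives exactly the claimed right-hand side. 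Integrability of both sides, needed to apply Fubini and to reduce to $F\ge0$, is routine: $\#\omega_{\Delta}$ has exponential moments under $\pi^{z,Q}_{\Delta}$, while $D$ is dominated by $q^{\#\omega_{\Delta}}/Z_{\Delta}(\omega_{\Delta^c})$ in case (C1) (since $N_{cc}^{\Delta}\le\#\omega_{\Delta}$) and by a constant multiple of $1/Z_{\Delta}(\omega_{\Delta^c})$ in case (C2) with bounded radii. In short, the whole argument is the textbook passage from DLR to GNZ, the only genuine input being the Papangelou-type identity for $N_{cc}^{\Delta}$ and the a.s. local finiteness it uses.
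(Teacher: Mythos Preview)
Your argument is correct and is precisely the standard derivation the paper has in mind: the paper actually omits the proof, stating only that it ``is exactly the same as the proof of the well-known Slivnyak-Mecke formula'', and your combination of the Mecke equation for $\pi^{z,Q}_{\Delta}$ with the DLR density $D$ and the Papangelou identity $N_{cc}^{\Delta}(\omega+\delta_X)-N_{cc}^{\Delta}(\omega)=1-k(X,\omega)$ is exactly that argument spelled out.
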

The proof of this lemma is exactly the same as the proof of the well-known Slivnyak-Mecke formula and is omitted.
To use the GNZ equations let us define a function $F$ such that $F((x,R),\omega)$ is equal to $1$ if the following conditions are fulfilled and $0$ otherwise.
\begin{enumerate}
\item $x \in \Delta$.

\item The connected component of $B(x,R)$ in $L(\omega_{\Delta } +\delta_{(x,R)})$ intersects $\Lambda$.

\item $B(x,R)$ is one of the balls of its connected component of $L(\omega_{\Delta} +\delta_{(x,R)})$ which minimize the quantity $k((x,R),\omega)$.
\end{enumerate}

By the GNZ equations applied to the bounded function 
$F(X,\omega)q^{k(X,\omega)-1}$, we have
\begin{align}
\int_{\Ocb} \underset{(x,R) \in \omega_{\Delta}}{\sum}  &
F((x,R),\omega-\delta_{(x,R)}) q^{k((x,R),\omega-\delta_{(x,R)})-1}
P^z(d\omega_{\Delta}|\omega_{\Delta^c})
\nonumber \\ & =
z \int_{\Ocb} \int_{\Delta\times \R^+}
F((x,R),\omega) 
m(dx,dR)
P^z(d\omega_{\Delta}|\omega_{\Delta^c})
\leq
z |\Delta|.
\label{egalite_gnz_comptage_cc}
\end{align}

In the following we find an upper bound $\alpha$ for the quantity 
$k((x,R),\omega- \delta_{(x,R)})$ when $F((x,R),\omega-\delta_{(x,R)})=1$.
To this end consider a ball $B(x,R)$ and let us count the maximum number of disjoint balls of radius not smaller than $R/2$ overlapping $B(x,R)$.
By a volume argument, this quantity is no more than $4^d$.
To obtain this value we just look at the intersection of each disjoint ball with the ball $B(x,2R)$.
The volume of each intersection is at least $(R/2)^d  v_d$ where $v_d$ is the volume of the unit ball in dimension $d$.
This bound is not sharp but is enough for the following, since it does not depend on $R$.

By contradiction assume that in $L(\omega_{\Delta })$ there is a connected component $L(\mathcal{C})$ (with this notation $\mathcal{C}$ is a restriction of $\omega_{\Delta}$) which overlaps $\Lambda$ and such that 
$\underset{(x,R)\in \mathcal{C}}{\min}k((x,R),\omega -\delta_{(x,R)} ) \geq \alpha$.

Let $(x_1,R_1) \in \mathcal{C}$ such that $B(x_1,R_1)$ overlaps  $\Lambda $. 
Such a ball exists by hypothesis on $\mathcal{C}$ and we have $R_1\leq 1$ and the distance between $x_1$ and $\Lambda$ is no more than $1$.

By hypothesis $k((x_1,R_1),\omega - \delta_{(x_1,R_1)}) \geq \alpha$, therefore $B(x_1,R_1)$ is connected to at least one ball of radius smaller than $R_1/2\leq 1/2$. 
Let $B(x_2,R_2)$ be such a ball.
The distance between $x_2$ and $\Lambda$ is no more than $5/2$, which means that $(x_2,R_2) \in \mathcal{C}$.
By the same argument, we can construct a sequence $(x_n,R_n)_n$ with $R_n \leq 2^{-n+1}$ and the distance between $x_n$ and $\Lambda$ is bounded by some quantity increasingly converging to $4$.
Therefore each $(x_n,R_n)$ is in $\mathcal{C}$. 
But this is impossible because $\mathcal{C}$ is a finite configuration.
So in the left-hand side of \eqref{egalite_gnz_comptage_cc}, each $(x,R)$ such that $F$ is equal to one satisfies $k((x,R),\omega-\delta_{(x,R)})\leq \alpha$.

With this we get from \eqref{egalite_gnz_comptage_cc} the following inequality

\begin{align}
\int_{\Ocb}
\underset{(x,R) \in \omega_{\Delta}}{\sum}
F((x,R),\omega - \delta_{(x,R)})
P^z(d\omega_{\Delta}|\omega_{\Delta^c})
\leq
z |\Delta| q^{1-\alpha}.
\nonumber
\end{align}
But
$ \underset{(x,R) \in \omega_{\Delta}}{\sum}
F((x,R),\omega - \delta_{(x,R)})$ 
dominates $N_{\xi}(\omega)$ (since a connected component can be counted several times if it has several "minimizing balls"), and we get the result.
\end{proof}

\section{Annex}\label{section_preuve_resultats_intermediaires}
\subsection{Proof of Theorem \ref{theo_uniquness_cc_infini}}
Let $P$ be a CRCM($z,Q,q$).
Since $P$ is a mixture of ergodic CRCM($z,Q,q$), see \cite{georgii_livre}, we can assume without loss of generality that $P$ is ergodic.
We want to prove Theorem \ref{theo_uniquness_cc_infini}, which states that $P$ has almost surely at most one infinite connected component.
This is a classical question of percolation theory which was treated for many models.
The proof involves a positive correlation inequality between specific events.
In the literature this positive correlation inequality was proved using independence property (\cite{meester_roy}), FKG inequalities  or finite energy property (\cite{burton_keane}).
Our model does not satisfy the independence property, and the FKG inequalities are not proved.
The next lemma gives a (weak) continuum version of the finite energy property.
\begin{lemma}\label{lemme_local_modification}
Let $\Lambda$ be a bounded subset of $\R^d$. 
Let $A$ be a event of $\Fcb_{\Lambda}$ such that $\pi_{\Lambda}^{z,Q}(A)>0$.
Let $B$ be an event of $\Fcb_{\Lambda^c}$ such that $P(B)>0$.
Then
\begin{align}
P(A \cap B)>0. \nonumber
\end{align}
\end{lemma}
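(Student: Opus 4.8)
The plan is to use the DLR equation \eqref{DLR} on the set $\Lambda$ to replace the configuration inside $\Lambda$ by a fresh Poisson sample, and then to quantify the cost of forcing the event $A$. Start from $P(A\cap B)=\int_{\Ocb}\1_B(\omega_{\Lambda^c})\int_{\Ocb}\1_A(\omega'_\Lambda)\,q^{N_{cc}^\Lambda(\omega'_\Lambda+\omega_{\Lambda^c})}/Z_\Lambda(\omega_{\Lambda^c})\,\pi^{z,Q}_\Lambda(d\omega'_\Lambda)\,P(d\omega)$. Since $B$ has positive $P$-probability, it suffices to bound, from below and uniformly on a set of positive $P$-measure of boundary conditions $\omega_{\Lambda^c}$, the inner conditional probability $P(A\mid\omega_{\Lambda^c})=\int\1_A\,q^{N_{cc}^\Lambda(\cdot+\omega_{\Lambda^c})}/Z_\Lambda(\omega_{\Lambda^c})\,d\pi^{z,Q}_\Lambda$.

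The key step is to show that for $\pi^{z,Q}_\Lambda$-almost every $\omega'_\Lambda\in A$ and $P$-almost every $\omega_{\Lambda^c}$, the exponent $N_{cc}^\Lambda(\omega'_\Lambda+\omega_{\Lambda^c})$ is finite, so that $q^{N_{cc}^\Lambda(\cdot)}>0$. In case (C1) ($q\ge 1$) one uses the elementary bound $0\le N_{cc}^\Lambda(\omega)\le \#\omega_\Lambda<\infty$ already invoked after \eqref{inegalite_cas1_R1}, which gives $q^{N_{cc}^\Lambda}\ge 1$; combined with the finiteness of the partition function $Z_\Lambda(\omega_{\Lambda^c})$ (non-degeneracy is part of the definition of a CRCM) this yields $P(A\mid\omega_{\Lambda^c})\ge \pi^{z,Q}_\Lambda(A)/Z_\Lambda(\omega_{\Lambda^c})>0$, and then $P(A\cap B)\ge \int_B \pi^{z,Q}_\Lambda(A)/Z_\Lambda(\omega_{\Lambda^c})\,P(d\omega)>0$ since the integrand is strictly positive on the positive-measure set $B$. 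In case (C2) ($q<1$) the inequality $q^{N_{cc}^\Lambda}\ge q^{\#\omega_\Lambda}$ goes the right way only after integrating, so instead one bounds $q^{N_{cc}^\Lambda(\omega'_\Lambda+\omega_{\Lambda^c})}\ge q^{\#\omega'_\Lambda}$ and restricts the $\pi^{z,Q}_\Lambda$-integral to the event $A\cap\{\#\omega'_\Lambda\le n\}$ for $n$ large enough that $\pi^{z,Q}_\Lambda(A\cap\{\#\omega'_\Lambda\le n\})>0$ (possible since $\pi^{z,Q}_\Lambda(A)>0$ and $\#\omega'_\Lambda<\infty$ a.s.), giving $P(A\mid\omega_{\Lambda^c})\ge q^{n}\,\pi^{z,Q}_\Lambda(A\cap\{\#\omega'_\Lambda\le n\})/Z_\Lambda(\omega_{\Lambda^c})>0$.

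The main obstacle is controlling the partition function $Z_\Lambda(\omega_{\Lambda^c})$: one needs that it is finite and strictly positive for $P$-almost every boundary condition, uniformly enough that the final integral over $B$ is strictly positive. Finiteness in case (C2) is immediate because $q^{N_{cc}^\Lambda}\le 1$, and positivity is immediate because $q^{N_{cc}^\Lambda}\ge q^{\#\omega'_\Lambda}>0$ on configurations with a bounded number of points; in case (C1), finiteness of $Z_\Lambda$ is exactly the non-degeneracy hypothesis built into the DLR definition, and positivity follows again from $q^{N_{cc}^\Lambda}\ge 1$. Since $P(A\cap B)=\int_B P(A\mid\omega_{\Lambda^c})\,P(d\omega)$ with the integrand strictly positive on $B$ and $P(B)>0$, the conclusion $P(A\cap B)>0$ follows. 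I would also remark that the same argument gives the slightly stronger statement that $P(A\mid\Fcb_{\Lambda^c})>0$ $P$-almost surely, which is the form actually needed in the proof of Theorem \ref{theo_uniquness_cc_infini}.
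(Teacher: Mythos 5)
Your route is the same as the paper's: apply the DLR equation \eqref{DLR} in $\Lambda$, factor $\1_{A\cap B}(\omega'_\Lambda+\omega_{\Lambda^c})=\1_A(\omega'_\Lambda)\1_B(\omega_{\Lambda^c})$ using $A\in\Fcb_\Lambda$, $B\in\Fcb_{\Lambda^c}$, and conclude that the resulting double integral is positive because the integrand is strictly positive on a product set of positive measure. The paper's proof is exactly this, stated in two lines; your write-up just makes the implicit positivity/finiteness checks explicit (which is a good instinct, since ``the integrand is positive'' does require $N_{cc}^\Lambda$ finite and $Z_\Lambda$ non-degenerate).

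One intermediate claim is wrong, though repairable. The bound $0\le N_{cc}^{\Lambda}(\omega)$ does not hold: $N_{cc}^{\Lambda}$ is a limit of differences $N_{cc}(\omega_\Delta)-N_{cc}(\omega_{\Delta\setminus\Lambda})$, and inserting balls in $\Lambda$ can merge several connected components of $\omega_{\Lambda^c}$, so $N_{cc}^{\Lambda}$ can be negative (this is precisely why Lemma \ref{lemme_minoration_Ncc} only yields a lower bound $K-\lceil\epsilon z\rceil$ with $K$ possibly very negative). Consequently, in case (C1) you cannot deduce $q^{N_{cc}^{\Lambda}}\ge 1$, hence not the uniform bound $P(A\mid\omega_{\Lambda^c})\ge\pi^{z,Q}_{\Lambda}(A)/Z_{\Lambda}(\omega_{\Lambda^c})$, and in case (C2) the assertion ``$Z_\Lambda<\infty$ because $q^{N_{cc}^{\Lambda}}\le 1$'' fails for the same reason (for $q<1$ a negative $N_{cc}^\Lambda$ makes $q^{N_{cc}^\Lambda}>1$). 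What your argument actually needs, and what is true, is only that $N_{cc}^{\Lambda}(\omega'_\Lambda+\omega_{\Lambda^c})$ is finite for almost every configuration (the well-definedness of the limit, \cite[Prop.~2.1]{dereudre_houdebert}), so that $q^{N_{cc}^{\Lambda}}>0$, together with the non-degeneracy of $Z_\Lambda$ assumed in the definition of the CRCM. With these two facts the inner integral is that of a strictly positive function over a set of positive $\pi^{z,Q}_\Lambda$-measure, hence positive for ($P$-a.e.) boundary condition, and integrating over $B$, which has positive $P$-measure, gives $P(A\cap B)>0$, exactly as in the paper. So the conclusion stands once the quantitative intermediate bounds are dropped or corrected; your closing remark that the argument really yields $P(A\mid\Fcb_{\Lambda^c})>0$ a.s.\ is correct and is indeed the form used for Theorem \ref{theo_uniquness_cc_infini}.
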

\begin{proof}
Since $P$ satisfies the DLR equations \eqref{DLR}, we have
\begin{align} 
P(A \cap B)
=
\int_{\Ocb} \int_{\Ocb} \1_A(\omega'_{\Lambda})
\1_B(\omega_{\Lambda^c})
\frac{q^{N_{cc}^{\Lambda}(\omega'_{\Lambda}+\omega_{\Lambda^c})}}
{Z_{\Lambda}(\omega_{\Lambda^c})}
\pi_{\Lambda}^{z,Q,q}(d\omega'_{\Lambda})
P(d\omega). \nonumber
\end{align}
The function integrated is positive, therefore the integral is positive and the lemma is proven.
\end{proof}

With Lemma \ref{lemme_local_modification}, one can carry out the rest of this classical proof of percolation theory.
We refer to \cite[section 3.6]{meester_roy} for more details.

\subsection{Proof of Proposiotion \ref{proposition_FK_infini}}
In this section $P$ is a CRCM($z/q,Q,q$) and $\tP^1$ is the associated coloured measure as in Proposition \ref{proposition_FK_infini}.
The colouration kernel is denoted by $C^1$, meaning that
\begin{align}
\nonumber
\tP^1(d\tomega)
=
C^1(d\tomega|\omega) P(d\omega).
\end{align}

To prove that $\tP^1$ is a WR($z,Q,q$), we are going to use the GNZ equations satisfied by the CRCM and the WR.
\begin{lemma}
\label{lemme_eq_gnz_crcm_wr}
A probability measure $P$ is a CRCM($z,Q,q$) if and only if for every bounded measurable function $G$ we have
\begin{align}
\int_{\Ocb} \underset{X \in \omega}{\sum}
G(X,\omega- \delta_X) P(d\omega)
=z \int_{\Ocb} \int_{\Scb} G(X,\omega) q^{1- k(X,\omega)} m(dX) P(d\omega),
\nonumber
\end{align}
where $m$ and $k$ are defined in Lemma \ref{lemme_gnz_conditionnel_crcm}.

A probability measure $\tP$ is a WR($z,Q,q$) if and only if for every bounded measurable function $F$ we have
\begin{align}
\int_{\Oc} \underset{\tX \in \tomega}{\sum}
F(\tX,\tomega- \delta_{\tX}) \tP(d\tomega)
=z \int_{\Oc} \int_{\Sc} F(\tX,\tomega) 
\1_{\A}(\tomega + \delta_{\tX}) \tm(d\tX) \tP(d\tomega),
\nonumber
\end{align}
where $\tX=(x,R,i)$ is coloured marked point and where
$\tm= \mathcal{L}^d \otimes Q \otimes \mathcal{U}_q $, with $\mathcal{U}_q$ the uniform measure on the set $\{1, \dots , q\}$.
\end{lemma}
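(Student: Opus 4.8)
The plan is to deduce both equivalences from the standard correspondence between the DLR equations and the GNZ (Georgii--Nguyen--Zessin) equations for Gibbs point processes, the only model-specific input being the identification of the Papangelou intensity. Recall that if a specification is given by local densities $f_\Lambda = h_\Lambda/Z_\Lambda(\cdot_{\Lambda^c})$ with respect to the Poisson process, then a probability measure $P$ satisfies the DLR equations for every bounded $\Lambda$ if and only if it satisfies the Campbell-type identity
\[
\int \sum_{X \in \omega} G(X, \omega - \delta_X)\, P(d\omega) = z \int \int_{\Scb} G(X,\omega)\, \lambda(X,\omega)\, m(dX)\, P(d\omega)
\]
for every bounded measurable $G$, where the Papangelou intensity $\lambda(X,\omega)$ is the one-point conditional density $h_\Lambda(\omega + \delta_X)/h_\Lambda(\omega)$ — a ratio that does not depend on the bounded $\Lambda$ as long as it contains the spatial component of $X$ — and where $\lambda$ must satisfy the consistency (cocycle) relation $\lambda(X,\omega)\lambda(Y,\omega+\delta_X)=\lambda(Y,\omega)\lambda(X,\omega+\delta_Y)$. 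The proof thus reduces to computing $\lambda$ in both cases and checking consistency.

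For the CRCM, $h_\Lambda(\omega)=q^{N_{cc}^{\Lambda}(\omega)}$, so the one-point conditional density is $q^{\,N_{cc}^{\Lambda}(\omega+\delta_X)-N_{cc}^{\Lambda}(\omega)}$. The geometric fact to record is that inserting the ball $B(X)$ into $L(\omega)$ merges exactly the $k(X,\omega)$ connected components of $L(\omega)$ that $B(X)$ meets into a single component (and creates one new component when $k(X,\omega)=0$), hence $N_{cc}^{\Lambda}(\omega+\delta_X)-N_{cc}^{\Lambda}(\omega)=1-k(X,\omega)$ for every bounded $\Lambda \ni x$; this also makes the $\Lambda$-independence transparent and rests on \cite[Prop. 2.1]{dereudre_houdebert}. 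Therefore $\lambda(X,\omega)=z\,q^{1-k(X,\omega)}$, which is exactly the Papangelou intensity already used in the proof of Proposition \ref{proposition_domination_sto_poisson}, and the first displayed identity of the lemma is the corresponding GNZ equation. Consistency is immediate by telescoping, since both sides of the cocycle relation equal $z^2\, q^{\,N_{cc}^{\Lambda}(\omega+\delta_X+\delta_Y)-N_{cc}^{\Lambda}(\omega)}$, which is symmetric in $X$ and $Y$.

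For the WR model the argument is the same with $h_\Lambda=\1_{\A}$. Since $\A$ is hereditary (erasing a coloured point cannot create an overlap between two balls of different colours), the ratio $\1_{\A}(\tomega+\delta_{\tX})/\1_{\A}(\tomega)$ is well defined wherever the denominator is positive and equals $\1_{\A}(\tomega+\delta_{\tX})$; moreover $\1_{\A}(\tomega+\delta_{\tX})=1$ forces $\1_{\A}(\tomega)=1$, so consistency holds trivially. Hence the Papangelou intensity with respect to $\tm$ is $\lambda(\tX,\tomega)=z\,\1_{\A}(\tomega+\delta_{\tX})$, and the second displayed identity follows. I expect the genuinely technical points to be, first, the passage from the localized DLR equations to the global GNZ identity in the forward direction — reducing a bounded $G$ to the case where its marked points have spatial component in a fixed bounded $\Lambda$, applying the finite-volume Mecke formula for $\pi^{z,Q}_{\Lambda}$ inside \eqref{DLR} (respectively \eqref{equation_dlr_wr}), pulling out the factor $q^{1-k(X,\cdot)}$ (respectively $\1_{\A}(\cdot+\delta_{\tX})$) via the energy-difference identity above, and letting $\Lambda \uparrow \R^d$ — and, second, the converse implication, which is the Nguyen--Zessin characterization and uses precisely the consistency of $\lambda$ checked above. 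Both are classical once $\lambda$ has been identified, so I would keep the write-up short, referring to \cite{georgii_livre} for the DLR--GNZ equivalence and detailing only the computation of the Papangelou intensities and the consistency check.
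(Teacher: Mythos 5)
Your proposal is correct and follows essentially the same route as the paper, which simply invokes the general equivalence between the DLR and GNZ equations for Gibbs interactions from the Nguyen--Zessin reference; identifying the Papangelou intensities $z\,q^{1-k(X,\omega)}$ and $z\,\1_{\A}(\tomega+\delta_{\tX})$ is exactly the model-specific input that reduction requires. You merely make explicit the energy-difference computation $N_{cc}^{\Lambda}(\omega+\delta_X)-N_{cc}^{\Lambda}(\omega)=1-k(X,\omega)$ and the cocycle check, which the paper leaves to the cited general result.
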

The proof of this lemma is done for general Gibbs interactions in \cite{nguyen_zessin}.

By the definition of $\tP^1$ we have
\begin{align}\label{equation_preuve_FK_1}
& \int_{\Oc} \int_{\Sc}  z F(\tX,\tomega) \1_{\A}(\tomega + \delta_{\tX})
\tm(d\tX) \tP^1(d\tomega)
\nonumber \\ &=
\int_{\Ocb} \int_{\Scb} \underbrace{\left(
\int_{\Oc} \underset{k=1 \dots q}{\sum} 
F((X,i),\tomega) \1_{\A}(\tomega + \delta_{(X,i)} ) C^1(d\tomega|\omega)
\right)}_{G(X,\omega)}
 \frac{z}{q} m(dX) P(d\omega).
 \nonumber
\end{align}
 Since $P$ is satisfying the GNZ equations of the CRCM($z/q,Q,q$), we have
\begin{align}
 \int_{\Oc} \int_{\Sc}  z F(\tX,\tomega) \1_{\A}(\tomega + \delta_{\tX})
& \tm(d\tX) \tP^1(d\tomega)
 \nonumber \\ &=
\int_{\Ocb} \underset{X \in \omega}{\sum} 
G(X,\omega - \delta_{X})
q^{k(X,\omega-\delta_X)-1}
P(d\omega). \nonumber
\end{align}

In $G(X,\omega - \delta_X)$ the indicator function is equal to $1$ if
\begin{itemize}
\item The ball $B(X)$ is connected only to finite connected components of $L(\omega - \delta_X)$. 
In that case for each colour $i$ of $X$, there is only one good colouration, among the $q^{k(X,\omega - \delta_X)}$ possible colourations, of the $k(X,\omega - \delta_X)$ connected components.

\item The ball $B(X)$ is connected to the only infinite connected component of $L(\omega - \delta_X)$. 
In that case $X$ can only take the colour $1$ and there is only one good colouration, among the $q^{k(X,\omega - \delta_X)-1}$ possible colourations since the infinite connected component is coloured $1$.
\end{itemize}

In both cases we find that

\begin{align}
G(X,\omega - \delta_{X}) q^{k(X,\omega-\delta_X)-1}
=
\int F(\tX,\tomega - \delta_{\tX}) C^1(d\tomega|\omega),
\nonumber
\end{align}
which closes the proof of Proposition \ref{proposition_FK_infini}.
\vspace{2cm}

{\it Acknowledgement:} This work was supported in part by the Labex CEMPI  (ANR-11-LABX-0007-01). 
\bibliographystyle{plain}
\bibliography{biblio}

\begin{thebibliography}{10}

\bibitem{burton_keane}
R.~M. Burton and M.~Keane.
\newblock Density and uniqueness in percolation.
\newblock {\em Comm. Math. Phys.}, 121(3):501--505, 1989.

\bibitem{chayes_kotecky}
J.~T. Chayes, L.~Chayes, and R.~Koteck{\'y}.
\newblock The analysis of the widom-rowlinson model by stochastic geometric
  methods.
\newblock {\em Comm. Math. Phys.}, 172(3):551--569, 1995.

\bibitem{coupier_dereudre}
David Coupier and David Dereudre.
\newblock Continuum percolation for quermass interaction model.
\newblock {\em Electron. J. Probab.}, 19:no. 35, 19, 2014.

\bibitem{dereudre_houdebert}
David Dereudre and Pierre Houdebert.
\newblock Infinite volume continuum random cluster model.
\newblock {\em Electron. J. Probab.}, 20:no. 125, 24, 2015.

\bibitem{georgii_haggstrom}
H.-O. Georgii and O.~H{\"a}ggstr{\"o}m.
\newblock Phase transition in continuum {P}otts models.
\newblock {\em Comm. Math. Phys.}, 181(2):507--528, 1996.

\bibitem{georgii_livre}
Hans-Otto Georgii.
\newblock {\em Gibbs measures and phase transitions}, volume~9 of {\em de
  Gruyter Studies in Mathematics}.
\newblock Walter de Gruyter \& Co., Berlin, second edition, 2011.

\bibitem{georgii_haggstrom_maes}
Hans-Otto Georgii, Olle H{\"a}ggstr{\"o}m, and Christian Maes.
\newblock The random geometry of equilibrium phases.
\newblock In {\em Phase transitions and critical phenomena, {V}ol. 18},
  volume~18 of {\em Phase Transit. Crit. Phenom.}, pages 1--142. Academic
  Press, San Diego, CA, 2001.

\bibitem{georgii_kuneth}
Hans-Otto Georgii and Torsten K{\"u}neth.
\newblock Stochastic comparison of point random fields.
\newblock {\em J. Appl. Probab.}, 34(4):868--881, 1997.

\bibitem{gouere_2008}
J.-B. Gou{\'e}r{\'e}.
\newblock Subcritical regimes in the {P}oisson {B}oolean model of continuum
  percolation.
\newblock {\em Ann. Probab.}, 36(4):1209--1220, 2008.

\bibitem{grimmett_livre_rcm}
Geoffrey Grimmett.
\newblock {\em The random-cluster model}, volume 333 of {\em Grundlehren der
  Mathematischen Wissenschaften [Fundamental Principles of Mathematical
  Sciences]}.
\newblock Springer-Verlag, Berlin, 2006.

\bibitem{jansen_2016}
S.~Jansen.
\newblock Continuum percolation for gibbsian point processes with attractive
  interactions.
\newblock {\em Electron. J. Probab.}, 21:22 pp., 2016.

\bibitem{liggett_schonmann_stacey}
T.~M. Liggett, R.~H. Schonmann, and A.~M. Stacey.
\newblock Domination by product measures.
\newblock {\em Ann. Probab.}, 25(1):71--95, 1997.

\bibitem{meester_roy}
Ronald Meester and Rahul Roy.
\newblock {\em Continuum percolation}, volume 119 of {\em Cambridge Tracts in
  Mathematics}.
\newblock Cambridge University Press, Cambridge, 1996.

\bibitem{moller_helisova08}
J.~M\o{}ller and K.~Helisov\'a.
\newblock Power diagrams and interaction processes for union of discs.
\newblock {\em Adv. Appli. Prob.}, 40(2):321--347, 2008.

\bibitem{nguyen_zessin}
X.~Nguyen and H.~Zessin.
\newblock {Integral and differential characterizations {G}ibbs processes}.
\newblock {\em Mathematische Nachrichten}, 88(1):105--115, 1979.

\bibitem{stucki_2013}
K.~Stucki.
\newblock Continuum percolation for gibbs point processes.
\newblock {\em Electron. Commun. Probab.}, 18:10 pp., 2013.

\bibitem{vandenberg_maes}
J.~van~den Berg and C.~Maes.
\newblock Disagreement percolation in the study of {M}arkov fields.
\newblock {\em Ann. Probab.}, 22(2):749--763, 1994.

\bibitem{widom_rowlinson}
B.~Widom and J.S. Rowlinson.
\newblock New model for the study of liquid-vapor phase transitions.
\newblock {\em J. Chem. Phys.}, 52:1670--1684, 1970.

\end{thebibliography}
\end{document}